\documentclass{amsart}
\usepackage[english]{babel}
\usepackage[T1]{fontenc}
\usepackage[color,matrix,arrow]{xy}
\usepackage{tikz}
\usetikzlibrary{decorations.markings,arrows,automata,arrows,backgrounds}
\usepackage{amsmath,amssymb,amsthm,amsfonts,amscd,mathrsfs,tikz-cd}
\usepackage{listings}
\usepackage{color}
\usepackage{algpseudocode}
\usepackage{comment}

\usepackage{amsmath}
\usepackage{amssymb}
\usepackage{amsfonts}
\usepackage{comment}
\usepackage{graphicx}
\usepackage[colorinlistoftodos]{todonotes}
\usepackage[colorlinks=true, allcolors=blue]{hyperref}
\usepackage{float}
\usepackage{mathtools}
\usepackage{amssymb,amsmath}
\usepackage{tikz}
\usetikzlibrary{shapes.geometric}
\usepackage{amsthm}

\usepackage{mathtools}

\DeclarePairedDelimiter\floor{\lfloor}{\rfloor}

\theoremstyle{definition}

\newtheorem{lemma}{Lemma}[section]
\newtheorem{definition}{Definition}[section]

\newcommand{\M}    {\mathcal{M}}

\newcommand{\SC}    {\mathrm{SC}}

\newtheorem{thm}{Theorem}

\newcommand{\midarrow}{\tikz \draw[-triangle 90] (0,0) -- +(.1,0);}


\newtheorem{lem}[thm]   {Lemma}
\newtheorem{cor}[thm]   {Corollary}

\newtheorem{defn}[thm]  {Definition}

\newtheorem{prop}[thm]  {Proposition}

\newtheorem{openquestion}{Open Question}

\newcounter{foo}  \Alph{foo}

\title{Star clusters in the Matching, Morse, and Generalized Morse complex}
\author{Connor Donovan, Nicholas A. Scoville}

\address{Department of Mathematics and Computer Science, Ursinus College, Collegeville PA 19426}

\email{codonovan@ursinus.edu}
\email{nscoville@ursinus.edu}

\date{\today}
\keywords{Discrete Morse Theory, complex of discrete Morse functions, Matching Complex, Star Cluster, Cluster Lemma}
\subjclass[2020]{ (Primary) 57Q70;  (Secondary) 55P10, 55U10, 05C70}

\begin{document}

\maketitle

\begin{abstract}
    In this paper, we determine the homotopy type of the Morse complex and matching complex of multiple families of complexes by utilizing star cluster collapses and the Cluster Lemma. We compute the homotopy type of the Morse complex of an extended notion of a star graph, as well as the homotopy type of the matching complex of a Dutch windmill graph. Additionally, we provide alternate computations of the homotopy type of the Morse complex of paths, the homotopy type of the matching complex of paths, and the homotopy type of the matching complex of cycles. We then use this same method of computing homotopy types to investigate the relationship between the homotopy type of the matching complex and the generalized Morse complex.
\end{abstract}

\section{Introduction}

    Let $K$ be a simplicial complex.  In \cite{Barmak13}, Barmak introduced the star cluster of a simplex in $K$ and proved that if $K$ is flag, then the star cluster is contractible. This provided a tool for studying the topology of independence complexes of graphs. For example, he used star clusters to show that the independence complex of any triangle-free graph has the homotopy type of a suspension and that the independence complex of a forest is either contractible or homotopy equivalent to a sphere. K. Iriye used star clusters to construct a matching tree for the independence complex of square grids with cyclic identification \cite{IRIYE-2012}, and S. Goyal et al. used star clusters to compute the homotopy type of the independence complexes of generalised Mycielskian of complete graphs \cite{GOYAL-2021}.  Another important tool in combinatorial topology is the Cluster lemma.  This result was arrived at independently by both Jonsson \cite[Lemma 4.2]{Jonsson2008} and Hersh  \cite[Lemma 4.1]{Hersh2005}, and it is an extremely convenient yet simple way to put a gradient vector field on a complex by gluing together gradient vector fields on a decomposition of the complex.

    A goal of this paper will be to utilize both star clusters and the Cluster Lemma to study the the homotopy type of the Morse complex, generalized Morse complex, and matching complex. The Morse complex of $K$, denoted $\M(K)$, is the simplicial complex of all gradient vector fields on $K$ (see Definition \ref{defn: gvf}).  Like the matching or independence complex of a graph, the Morse complex of a simplicial complex $K$ is a complex that stores certain combinatorial information about $K$, and determining its homotopy type is an interesting question.  Although the Morse complex of $K$ in general is not a flag complex, it is a flag complex when $K=T$ is a tree.  In this special case, we use star clusters and the Cluster lemma to show in Proposition \ref{prop: tree suspension} that $\M(T)$ has the homotopy type of a suspension. We also compute the homotopy type of the Morse complex on any number of paths of two different lengths joined at a single point (Theorem \ref{thm: extended star}) and provide an alternate computation of the homotopy type of the Morse complex of a path (Proposition \ref{prop: path homotopy}) originally computed by D. Kozlov in \cite{Kozlov99}.

     We next investigate the homotopy type of the generalized Morse complex, first introduced in \cite{scoville2020higher}. From the perspective of star clusters, the generalized Morse complex has the advantage that it includes cycles and hence is a flag complex. We compute the homotopy type of the generalized Morse complex of a cycle (Theorem \ref{thm: generalized cycle}) and show that the Morse complex and Generalized Morse complex of a cycle with a single leaf have the same homotopy type.  There seems to be further connections between the Morse complex and Generalized Morse complex, and we discuss some of these possibilities and open questions in the last section.

    Another goal of this paper will be to utilize the idea of the star cluster and the Cluster Lemma to compute the homotopy type of the matching complex of certain complexes. If we consider a graph $G$, the matching complex, denoted $\mathrm{M}(G)$, is the complex constructed from all independent edge sets on $G$. It is easy to see that this is a flag complex.  An interesting connection was made between the Morse and matching complex in \cite{DCNY-2022}\footnote{Bceause they are defining a matching complex on all simplicial complexes, the authors define the matching complex of a graph $G$ to be comprised of elements of matchings on the order poset of $G$, i.e.,  a matching on the barycentric subdivision of $G$}, where the authors provide a natural filtration on the matching complex of a finite simplicial complex that allows us to relate the matching complex to the Morse complex. The authors observe that there is a one-to-one correspondence between elements in the generalized Morse complex of a graph $G$ and matchings on the barycentric subdivision of $G$ so that $\mathcal{G}\M(G)\cong \mathrm{M}(\mathrm{sd}(G))$. Combining this with the fact mentioned above that the Morse complex of a tree is flag, we have the relationship  $\M(T)\cong \mathcal{G}\M(T)\cong \mathrm{M}(\mathrm{sd}(T))$.

    Additionally, the homotopy type of the matching complexes for paths and cycles \cite{Kozlov99} and for forests \cite{MM-2008} have been shown to be either contractible, a sphere, or a wedge of spheres. We provide alternate proofs of the computation of the homotopy type of the matching complex for paths and cycles (Proposition \ref{prop: matching path}, Proposition \ref{prop: matching cycle}) as well as provide a computation of the homotopy type of Dutch windmill graphs (Theorem \ref{thm: matching windmill graph}).

\section{Preliminaries}

In this section, we establish the notation, terminology, and background results that will be needed throughout this paper. All simplicial complexes are assumed to be connected unless otherwise stated. We use $\simeq$ to denote a homotopy equivalence and $\cong$ to denote an isomorphism.

\subsection{Background}
Because we will be taking constructions on graphs, we adopt some graph theoretic language.


\begin{defn}
	A simplicial complex $G$ such that $\dim(G) = 1$ is called a \textbf{graph}.  If $G$ is an acyclic graph, then we call $G$ a \textbf{tree}. The number of edges of a vertex is the \textbf{degree} of the vertex.  A \textbf{leaf} is any vertex of degree $1$. The \textbf{path} $P_n$ on $n$ vertices is the simplicial complex with facets
 $$\{v_0,v_1\}, \{v_1,v_2\}, \cdots \{v_{n-2}, v_{n-1}\},
  $$
 The \textbf{length} of the path $P_n$ is the number of edges ($n-1$) in the path.  A \textbf{cycle} of length $n\geq 3$ is the simplicial complex $C_n$ with facets

 $$\{v_0,v_1\}, \{v_1,v_2\}, \cdots \{v_{n-2}, v_{n-1}\}, \{v_{n-1}, v_0\}.$$
\end{defn}

    \begin{defn}
        A simplicial complex $K$ is a \textbf{flag complex} if for each non-empty set of vertices $\sigma$ such that $\{v_i,v_j\} \in K$ for every $v_i, v_j \in \sigma$, we have that $\sigma \in K$.
    \end{defn}

    The flag complex of a graph $G$ is the smallest flag complex that has $G$ as a 1-skeleton.

    \begin{defn}
	    Let $K$ be a simplicial complex and $v \in K$ be a vertex. The \textbf{\textit{star}} of $v$ in $K$, denoted by $\mathrm{st}(v)$, is the simplicial complex induced by the set of all simplices of $K$ containing $v$. More generally,  the star of a simplex $s$ is the set of simplices having $s$ as a face.
	
	\end{defn}

    \begin{defn} \cite[Definition 3.1]{Barmak13}
        Let $\sigma$ be a simplex of a simplicial complex, $K$. We define the \textbf{star cluster} of $\sigma$ in $K$ as the subcomplex
        \begin{align*}
            \mathrm{SC}_K(\sigma) = \bigcup_{v \in \sigma}{\mathrm{st}_K(v)}
        \end{align*}
        We denote the star cluster of $\sigma$ by $\mathrm{SC}(\sigma)$ when the context is clear.
    \end{defn}

A simple but fundamental property of the star cluster of a simplex is that it is collapsible if the complex is flag.

    \begin{prop}\label{prop: star cluster collapsible}\cite[Lemma 3.2]{Barmak13} The star cluster of a simplex in a flag complex is collapsible.
    \end{prop}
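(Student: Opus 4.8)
The plan is to exhibit an acyclic matching (a discrete gradient vector field) on $\SC_K(\sigma)$ with exactly one critical simplex, so that the fundamental theorem of discrete Morse theory yields a collapse to a point. Write $\sigma = \{v_0, \dots, v_k\}$. The starting observation is a reformulation: since $\st_K(v)$ consists of all faces of simplices containing $v$, a simplex $\tau$ lies in $\st_K(v)$ exactly when $\tau \cup \{v\} \in K$, and hence $\tau \in \SC_K(\sigma)$ if and only if $\tau \cup \{v_i\} \in K$ for some $i$. For each $\tau \in \SC_K(\sigma)$ this lets me define the \emph{pivot index} $i(\tau) = \min\{i : \tau \cup \{v_i\} \in K\}$, which is well defined, together with the pivot vertex $v_{i(\tau)}$.

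First I would match each $\tau$ with $\tau \,\triangle\, \{v_{i(\tau)}\}$, toggling the pivot vertex, leaving only $\{v_0\}$ unmatched (its would-be partner being the empty face). For this to be a legitimate matching I must check that it is an involution, i.e.\ that toggling the pivot does not change the pivot. Setting $m = i(\tau)$ and $\mu = \tau \,\triangle\, \{v_m\}$, one has $\mu \cup \{v_m\} = \tau \cup \{v_m\} \in K$, so $i(\mu) \le m$; and for $i < m$ the fact that $\tau \cup \{v_i\} \notin K$ forces, by flagness together with the fact that $\tau$ is itself a simplex, a non-edge $\{v_i, u\}$ with $u \in \tau$ and $u \ne v_i$. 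Here the hypothesis that $\sigma$ is a simplex is essential: it guarantees $\{v_i, v_m\} \in K$, so $u \ne v_m$ and therefore $u$ survives the toggle, giving $\mu \cup \{v_i\} \notin K$ and hence $i(\mu) = m$. The same computation shows each $\tau \,\triangle\, \{v_{i(\tau)}\}$ again lies in $\SC_K(\sigma)$, so the matching stays inside the complex and has the single critical cell $\{v_0\}$.

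The step I expect to be the main obstacle is verifying acyclicity, and I would handle it with a monotonicity argument on the pivot index. Along any gradient $V$-path the lower-dimensional cells $\alpha_0, \alpha_1, \dots$ satisfy $\alpha_{s+1} \subseteq \alpha_s \cup \{v_{i(\alpha_s)}\}$, and since $\alpha_{s+1} \cup \{v_{i(\alpha_s)}\} \in K$ this yields $i(\alpha_{s+1}) \le i(\alpha_s)$, so the pivot index is non-increasing. If it were ever constant, say $i(\alpha_{s+1}) = i(\alpha_s) = m$, then $v_m$ would already belong to $\alpha_{s+1}$ (it is retained when passing through $\alpha_s \cup \{v_m\}$, as the vertex removed to form $\alpha_{s+1}$ cannot be $v_m$), forcing $\alpha_{s+1}$ to be the \emph{upper} element of its matched pair and halting the path. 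Hence along any genuinely continuing path the pivot index strictly decreases, and since it is bounded below no closed $V$-path can exist, so the matching is acyclic. With a single critical cell, $\SC_K(\sigma)$ collapses to the vertex $\{v_0\}$ and is therefore collapsible. Flagness is used twice and cannot be dropped: it is precisely what converts non-membership in $K$ into an explicit non-edge, which is what makes the pivot rule consistent.
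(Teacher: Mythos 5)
Your proof is correct. Note that the paper itself gives no proof of this statement---it is quoted from Barmak---so the comparison is really with \cite[Lemma 3.2]{Barmak13}, and your argument is in substance a reformulation of Barmak's: he also works with the minimal index $i(\tau)=\min\{i:\tau\cup\{v_i\}\in K\}$ and pairs $\tau$ with $\tau\,\triangle\,\{v_{i(\tau)}\}$, but presents the result as an explicit sequence of elementary collapses of $\SC(\sigma)$ onto the cone $\st(v_0)$, whereas you package the same pairing as a single acyclic matching with one critical vertex. The two checks you isolate are exactly the right ones and both go through: flagness converts $\tau\cup\{v_i\}\notin K$ into a missing edge $\{v_i,u\}$ with $u\in\tau$, the edge $\{v_i,v_m\}\in K$ (available only because $\sigma$ is a simplex) rules out $u=v_m$, so the pivot is stable under the toggle and the pairing is a genuine involution landing inside $\SC_K(\sigma)$; and along a $V$-path the pivot index is non-increasing and must strictly decrease whenever the path continues, since $v_{i(\alpha_s)}$ survives into $\alpha_{s+1}$ and would force $\alpha_{s+1}$ to be the upper element of its pair if the pivot stayed constant. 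What your formulation buys is that acyclicity reduces to a one-line monotonicity argument followed by an appeal to Theorem \ref{thm: Forman}; what the original formulation buys is an explicit collapse $\SC(\sigma)\searrow \st(v_0)\searrow v_0$ with no appeal to discrete Morse theory.
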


Proposition \ref{prop: star cluster collapsible} is one of the main tools we use in this paper.  The other tool is the following Lemma:

    \begin{lem}\label{lem: cluster lemma}(\cite[Lemma 4.2]{Jonsson2008} and \cite[Lemma 4.1]{Hersh2005}) [Cluster Lemma]
	   Let $\Delta$ be a simplicial complex which decomposes into collections $\Delta_{\sigma}$ of simplices, indexed by the elements $\sigma$ in a partial order $P$ which has a unique minimal element $\sigma_0 = \Delta_0$, Furthermore, assume that this decomposition is as follows:
	        \begin{enumerate}
	            \item Each simplex belongs to exactly one $\Delta_{\sigma}$.
	
	            \item For each $\sigma \in P$, $\bigcup_{\tau \leq \sigma}{\Delta_{\tau}}$ is a subsimplicial complex of $\Delta$.
	        \end{enumerate}
	   For each $\sigma \in P$, let $M_{\sigma}$ be an acyclic matching in $\Delta_{\sigma}$. Then $\bigcup_{\sigma \in P}{M_{\sigma}}$ is an acyclic matching on $K$.
    \end{lem}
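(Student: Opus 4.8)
The plan is to verify two things: that the union $M := \bigcup_{\sigma \in P} M_{\sigma}$ is a well-defined matching on the Hasse diagram of $\Delta$, and that it is acyclic. The first is immediate from hypothesis (1): since the collections $\Delta_{\sigma}$ partition the simplices of $\Delta$, every matched pair of each $M_{\sigma}$ consists of two simplices lying in the single block $\Delta_{\sigma}$, so no simplex is matched in two different blocks and each simplex is matched at most once. Thus $M$ is a matching, and all the content lies in proving acyclicity. The whole argument will rest on a single monotonicity observation extracted from hypothesis (2).

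For each simplex $s$ of $\Delta$, let $p(s)$ denote the unique index $\sigma \in P$ with $s \in \Delta_{\sigma}$, which exists and is unique by (1). First I would record the key consequence of (2): if $\beta$ is a face of $\alpha$, then $p(\beta) \leq p(\alpha)$. Indeed, writing $\sigma = p(\alpha)$, the simplex $\alpha$ lies in the subcomplex $\bigcup_{\tau \leq \sigma} \Delta_{\tau}$, and since that union is a subcomplex it contains every face of $\alpha$; hence $\beta \in \Delta_{\rho}$ for some $\rho \leq \sigma$, i.e.\ $p(\beta) \leq p(\alpha)$. So the block index is weakly monotone along the face relation.

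Now I would argue by contradiction. Suppose $M$ admits a nontrivial closed gradient path
\[
\alpha_0, \beta_0, \alpha_1, \beta_1, \ldots, \alpha_{r}, \beta_{r}, \alpha_{r+1} = \alpha_0,
\]
where $(\alpha_i, \beta_i) \in M$ (so $\alpha_i$ is a facet of $\beta_i$) and $\alpha_{i+1}$ is a facet of $\beta_i$ distinct from $\alpha_i$. On each matched step $\alpha_i$ and $\beta_i$ lie in a common block, so $p(\alpha_i) = p(\beta_i)$; on each descending step $\alpha_{i+1}$ is a face of $\beta_i$, so by the monotonicity above $p(\alpha_{i+1}) \leq p(\beta_i) = p(\alpha_i)$. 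Hence $p$ is non-increasing along the path, and since the path is closed we get $p(\alpha_0) = p(\alpha_{r+1})$, which forces $p$ to be constant, equal to some fixed $\sigma$, on every simplex of the path. Consequently all the $\alpha_i, \beta_i$ belong to $\Delta_{\sigma}$, every matched pair belongs to $M_{\sigma}$, and every descending step stays inside $\Delta_{\sigma}$. Thus the path is in fact a nontrivial closed gradient path of $M_{\sigma}$ inside $\Delta_{\sigma}$, contradicting the assumption that $M_{\sigma}$ is acyclic. Therefore no such cycle exists and $M$ is acyclic.

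The main obstacle, or rather the point requiring the most care, is the bookkeeping that confines the cycle to a single block: one must check not only that the simplices $\alpha_i, \beta_i$ share the same index $\sigma$, but also that the matched edges genuinely come from $M_{\sigma}$ and that the descending edges stay within $\Delta_{\sigma}$, so that the derived cycle is a legitimate closed gradient path of $M_{\sigma}$ and not merely a cycle in the ambient complex. Keeping the convention for gradient paths and the direction of the inequality in the monotonicity step consistent is the only real subtlety; once the monotonicity of $p$ is in hand, the contradiction is forced. I would also note that the unique minimal element $\sigma_0$ plays no role in this argument and is not needed for acyclicity; only hypotheses (1) and (2) are used.
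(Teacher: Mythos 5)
Your proof is correct; the paper states this lemma only as a citation to Jonsson and Hersh without reproving it, and your argument --- the weak monotonicity of the block index $p$ along the face relation derived from hypothesis (2), which forces any closed gradient path of the union matching to lie entirely in a single block $\Delta_\sigma$ and hence contradict the acyclicity of $M_\sigma$ --- is precisely the standard proof given in those references. Your remark that the unique minimal element plays no role and that only hypotheses (1) and (2) are needed is also accurate.
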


    Lemma \ref{lem: cluster lemma} provides a way to put an acyclic matching on the entire complex by patching together acyclic matchings on parts of the complex. The key information is what is left unmatched or the critical simplices.  In some cases, certain collections of critical simplices will uniquely determine the homotopy type of the original complex. This is given in the classical result of Forman.

    \begin{thm}\label{thm: Forman}\cite[Corollary 3.5]{F-95} Let $K$ be a simplicial complex and $M$ an acyclic matching on $K$ with $m_i$ critical simplices of dimension $i$.  Then $K$ has the homotopy type of a CW complex with exactly $m_i$ cells of dimension $i$. In particular, if $m_0=1, m_n=k$, and $m_j=0$ for all $j\neq 0,n$, then $K$ has the homotopy type of a $k$-fold wedge of $S^n.$
    \end{thm}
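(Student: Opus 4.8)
The plan is to deduce the statement from the two fundamental structural lemmas of discrete Morse theory, using that an acyclic matching $M$ is the gradient vector field of some discrete Morse function $f$ on $K$, with the unmatched simplices being exactly the critical ones. For a real number $c$, let $K(c)$ denote the subcomplex consisting of all simplices $\sigma$ (together with their faces) for which $f(\sigma)\le c$. Since $K$ is finite there are finitely many critical values, and as $c$ increases past a threshold the complex $K(c)$ either stays the same, grows by a single matched pair, or grows by a single critical simplex. I would prove the theorem by tracking the homotopy type of $K(c)$ across these thresholds.

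The two lemmas I would establish are: (A) if the half-open interval $(a,b]$ contains no critical value, then $K(b)$ collapses onto $K(a)$ and is therefore homotopy equivalent to it; and (B) if $(a,b]$ contains a single critical value, realized by a critical simplex of dimension $p$, then $K(b)$ is homotopy equivalent to $K(a)$ with one $p$-cell attached along its boundary. For (B) the critical simplex $\tau$ of dimension $p$ has all of its proper faces already in $K(a)$, so $K(b)=K(a)\cup_{\partial\tau}\tau$ is literally the attachment of a $p$-cell. Starting from $K(c)=\emptyset$ for $c$ small and applying (A) and (B) across each successive threshold, an induction on the critical values shows that $K$ itself, which equals $K(c)$ for $c$ large, is homotopy equivalent to a CW complex with exactly one $p$-cell for each critical simplex of dimension $p$, i.e. with $m_i$ cells in dimension $i$.

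The main obstacle is Lemma (A), the collapsing step, and it is exactly here that acyclicity of $M$ is used. When $(a,b]$ contains no critical value, the simplices added in passing from $K(a)$ to $K(b)$ occur in matched pairs $(\sigma,\tau)$ with $\dim\tau=\dim\sigma+1$; to collapse I need each such $\sigma$ to be a free face, meaning $\tau$ is its unique coface in the complex built so far. Guaranteeing that these pairs can be ordered so that every collapse is legitimate is precisely the content of acyclicity: a directed cycle in the modified Hasse diagram would obstruct any such ordering, whereas its absence yields a discrete Morse function and a compatible linear order in which each matched pair is an elementary collapse. Care is needed to verify the free-face condition at the moment each pair is removed, rather than merely at the end.

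Finally, for the special case I would analyze the resulting CW complex $X$ directly. With $m_0=1$, $m_n=k$, and $m_j=0$ otherwise, $X$ has a single $0$-cell and $k$ cells of dimension $n$ with nothing in between, so its $(n-1)$-skeleton is a single point. Each $n$-cell is thus attached along a map $S^{n-1}\to\{\mathrm{pt}\}$, which is necessarily constant; attaching an $n$-cell by a constant map yields a copy of $S^n$, and doing so $k$ times at the common point produces $\bigvee_{i=1}^{k}S^n$. (When $n=1$ the same count gives $k$ loops at a single vertex, again a wedge of $k$ circles.) Hence $K\simeq\bigvee_{i=1}^{k}S^n$, as claimed.
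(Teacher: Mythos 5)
The paper offers no proof of this statement: it is quoted verbatim as Corollary 3.5 of Forman's \emph{Morse theory for cell complexes}, so there is no in-paper argument to compare against. Your proposal correctly reconstructs the standard (indeed, Forman's own) proof --- pass from the acyclic matching to a discrete Morse function via Chari's correspondence, then run the sublevel-complex induction using the two structural lemmas (an interval with no critical value gives a collapse, an interval with one critical value gives a cell attachment), and finish the special case by noting that with $m_0=1$ and $m_j=0$ for $0<j<n$ the $(n-1)$-skeleton is a point, forcing constant attaching maps and hence a $k$-fold wedge of $S^n$.
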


    All of our computations below will in fact satisfy the stated special case of Theorem \ref{thm: Forman} and thus allow us to determine the homotopy type of the complex in question.

\section{Homotopy Type of the Morse Complex}

In order to describe the Morse complex and Generalized Morse complex, we will need the following.

\begin{defn}
        Let $K$ be a simplicial complex.  A \textbf{discrete vector field} $V$ on $K$ is defined by
        $$V:=\{(\sigma^{(p)}, \tau^{(p+1)}) : \sigma< \tau, \text{ each simplex of } K \text{ in at most one pair}\}.
        $$
        Any pair in $(\sigma,\tau)\in V$ is called a \textbf{regular pair}, and $\sigma, \tau$ are called \textbf{regular simplices} or just \textbf{regular}.  If $(\sigma^{(p)},\tau^{(p+1)})\in V$, we say that $p+1$ is the \textbf{index} of the regular pair. Any simplex in $K$ which is not in $V$ is called \textbf{critical}.
    \end{defn}

    \begin{defn}\label{defn: gvf}
        Let $V$ be a discrete vector field on a simplicial complex $K$.  A \textbf{$V$-path} or \textbf{gradient path}  is a sequence of simplices $$\alpha^{(p)}_0, \beta^{(p+1)}_0, \alpha^{(p)}_1, \beta^{(p+1)}_1, \alpha^{(p)}_2\ldots , \beta^{(p+1)}_{k-1}, \alpha^{(p)}_{k}$$ of $K$ such that $(\alpha^{(p)}_i,\beta^{(p+1)}_i)\in V$ and $\beta^{(p+1)}_i>\alpha_{i+1}^{(p)}\neq \alpha_{i}^{(p)}$ for $0\leq i\leq k-1$. If $k\neq 0$, then the $V$-path is called  \textbf{non-trivial.}  A $V$-path is said to be  \textbf{closed} if $\alpha_{k}^{(p)}=\alpha_0^{(p)}$.  A discrete vector field $V$ which contains no  non-trivial closed $V$-paths is called a \textbf{gradient vector field}.
    \end{defn}

    If the gradient vector field consists of only a single element, we say it is a \textbf{primitive} gradient vector field. We often denote a primitive gradient vector field $\{ (u, uv) \}$ with $p=0$ by $(u)v$.

 \begin{defn}\label{MorseComplexDef2}
        The \textbf{Morse complex} of $K$, denoted $\mathcal{M}(K)$, is the simplicial complex whose vertices are given by primitive gradient vector fields and whose $n$-simplices are given by gradient vector fields with $n+1$ regular pairs.  A gradient vector field $f$ is then associated with all primitive gradient vector fields $f:=\{f_0, \ldots, f_n\}$ with $f_i\leq f$ for all $0\leq i\leq n$.
    \end{defn}

\begin{lemma} \label{lem: flag}
    The Morse complex $\M(K)$ is a flag complex if and only if $K$ is a tree.
\end{lemma}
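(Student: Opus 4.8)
The plan is to prove both directions of the biconditional. Recall that $\M(K)$ is flag precisely when every collection of pairwise-compatible primitive gradient vector fields (pairwise edges in $\M(K)$) is itself a gradient vector field (a simplex of $\M(K)$). Two primitive gradient vector fields $(u)v$ and $(x)y$ span an edge iff $\{(u,uv),(x,xy)\}$ is a gradient vector field, which — since a two-element discrete vector field can only fail to be gradient by forming a closed $V$-path or by reusing a simplex — amounts to saying the two pairs are disjoint as simplices and do not form a length-two closed gradient path. So the question reduces to: when can pairwise-compatibility fail to imply global compatibility, and the only obstruction available is a \emph{non-trivial closed $V$-path} that is invisible to any two of its constituent pairs.

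For the ``if'' direction, I would argue that when $K=T$ is a tree, no such hidden obstruction can exist. The key observation is that in a $1$-dimensional complex, every primitive gradient vector field is of the form $(u)uv$ pairing a vertex with an incident edge (index $1$), and a closed $V$-path $\alpha_0,\beta_0,\alpha_1,\dots,\alpha_k=\alpha_0$ is exactly a sequence of vertices and edges tracing a cycle in the graph, with each edge $\beta_i$ paired to vertex $\alpha_i$. Since $T$ is acyclic, no such closed path of positive length exists, so any discrete vector field on $T$ is automatically gradient. Hence a set of primitive gradient vector fields is a simplex of $\M(T)$ iff no simplex of $T$ is used twice, and this ``no shared simplex'' condition is purely pairwise: if every pair is disjoint, the whole collection is a valid matching. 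Therefore pairwise-edges assemble into a simplex, and $\M(T)$ is flag.

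For the ``only if'' direction, I would prove the contrapositive: if $K$ is not a tree, then $\M(K)$ fails to be flag. There are two ways $K$ can fail to be a tree: it may contain a cycle, or it may have dimension $\geq 2$. The cleanest route is to exhibit, in each case, a collection of primitive gradient vector fields that is pairwise compatible but not globally a gradient vector field. If $K$ contains a cycle $v_0 v_1 \cdots v_{n-1} v_0$, consider the primitive fields $(v_0)v_0v_1,\ (v_1)v_1v_2,\ \dots,\ (v_{n-1})v_{n-1}v_0$; any two of them are disjoint and short enough not to close up, so they span pairwise edges, yet together they form a non-trivial closed $V$-path and so are not a gradient vector field — the missing face witnesses the failure of flagness. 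If instead $\dim(K)\geq 2$, there is a triangle (or higher simplex) providing higher-dimensional pairs, and I expect a similar small configuration of primitive fields around a $2$-simplex that is pairwise gradient but contains a closed path once all are taken together; the standard example is the boundary of a triangle together with the appropriate index-$1$ and index-$2$ pairings.

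The main obstacle will be the higher-dimensional case of the ``only if'' direction: constructing an explicit pairwise-compatible-but-not-gradient family when $\dim(K)\geq 2$ requires care, because closed $V$-paths at index $\geq 2$ are less intuitive than graph cycles, and one must verify that every \emph{proper subset} of the chosen configuration really is a genuine gradient vector field (so that all the needed faces are present in $\M(K)$) while the full set is not. I would handle this by choosing the smallest witnessing subcomplex — a single $2$-simplex with its three boundary edges — and directly checking the finitely many pairs, which keeps the verification bounded. The cycle case, by contrast, is essentially immediate from the acyclicity discussion, so the real content of the lemma is pinning down that closed $V$-paths are the unique obstruction to flagness and that they arise exactly when $K$ is not a tree.
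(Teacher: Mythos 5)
Your ``if'' direction and the cycle case of your ``only if'' direction are sound and match the paper's argument: in a graph the discrete-vector-field condition is purely pairwise, the only further obstruction to being a gradient vector field is a closed $V$-path of index $1$, such a path traces a cycle in the underlying graph, and so acyclicity of $T$ gives flagness while a cycle $v_0v_1\cdots v_{n-1}v_0$ in $K$ yields the pairwise-compatible, non-gradient family $(v_0)v_0v_1,\dots,(v_{n-1})v_{n-1}v_0$ witnessing non-flagness.

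The gap is in the case $\dim(K)\geq 2$, which you defer to a configuration ``around a $2$-simplex \ldots{} with the appropriate index-$1$ and index-$2$ pairings'' and identify as the main difficulty. First, the specific construction you gesture at cannot work: in a non-trivial closed $V$-path consecutive top simplices $\beta_i,\beta_{i+1}$ must be distinct (otherwise $\beta_i$ lies in two pairs), so an index-$2$ closed path requires at least two distinct $2$-simplices, and no family supported on a single $2$-simplex and its boundary produces one. Second, and more importantly, no higher-index argument is needed at all: if $K$ contains a $2$-simplex $\{v_0,v_1,v_2\}$, its boundary is already a $3$-cycle in the $1$-skeleton of $K$, and your cycle argument applied to $(v_0)v_0v_1,(v_1)v_1v_2,(v_2)v_2v_0$ --- pairwise compatible, yet containing an index-$1$ closed $V$-path --- exhibits the missing face. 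This is exactly how the paper disposes of the dimension issue: if $\M(K)$ is flag then the $1$-skeleton of $K$ is acyclic, which simultaneously forbids graph cycles and $2$-simplices (each of which contributes a triangle), so $K$ is a tree. Your outline is repairable in one line, but as written the higher-dimensional case is both unfinished and aimed at the wrong kind of obstruction.
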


\begin{proof}
    Let $T$ be a tree and $\M(T)$ the Morse complex of $T$. For $\M(T)$ to be a flag complex, each non-empty set of mutually compatible vertices needs to be all together compatible. In other words, for each non-empty set of vertices $\sigma$ such that $\{v, w\} \subseteq \M(T)$ for every $v, w \in \sigma$, we have that $\sigma \in \M(T)$. Now the only case when a collection of pairwise compatible primitive gradient vector fields may not be compatible is when they form a cycle. But since trees are acyclic, a collection of pairwise compatible primitive gradient vector fields can never form a cycle so that $\M(T)$ is a flag complex.

   Now suppose $\M(K)$ is a flag complex. Clearly neither $K$ nor the 1-skeleton of $K$ can contain a cycle since otherwise there would exist a collection of mutually compatible vertices on $\M(K)$ that are not all together compatible. Thus $K$ must be a tree.
\end{proof}

Although the flag condition greatly reduces the kind of Morse complexes that we can study directly using star clusters, the following result of Barmak will allow us to say something general about the Morse complex of all trees.

\begin{lem}\label{lem: two contractible suspension}\cite[Lemma 3.4]{Barmak13} Let $K$ be a simplicial complex and $K_1, K_2$ be two collapsible subcomplexes such that $K=K_1\cup K_2$.  Then $K$ is homotopy equivalent to $\Sigma(K_1\cap K_2)$.
\end{lem}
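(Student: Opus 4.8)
The plan is to recognize $K$ as a pushout of its two contractible pieces and then use homotopy invariance of the pushout to collapse each piece to a point, turning the union into a suspension. The guiding picture is that the suspension $\Sigma X$ is itself the union of two cones $CX$ glued along $X$, and each cone is contractible; the lemma asserts that, up to homotopy, this is the only way a space can arise as a union of two contractible subcomplexes.

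First I would observe that a collapsible complex is contractible, so $K_1 \simeq \ast \simeq K_2$, and that $K_1 \cap K_2$ is again a subcomplex of $K$. Since $K = K_1 \cup K_2$ with the two pieces glued exactly along $K_1 \cap K_2$, the complex $K$ is the (strict) pushout of
$$K_1 \xleftarrow{\,i_1\,} K_1\cap K_2 \xrightarrow{\,i_2\,} K_2,$$
where $i_1,i_2$ are the subcomplex inclusions. Because inclusions of subcomplexes are cofibrations, this strict pushout agrees with the homotopy pushout (the double mapping cylinder) of the same diagram.

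Next I would apply the gluing lemma for homotopy pushouts to the map of diagrams
$$\begin{array}{ccccc} K_1 & \longleftarrow & K_1\cap K_2 & \longrightarrow & K_2 \\ \downarrow & & \big\| & & \downarrow \\ \ast & \longleftarrow & K_1\cap K_2 & \longrightarrow & \ast \end{array}$$
whose outer vertical maps are the collapses $K_i \to \ast$ (homotopy equivalences) and whose middle map is the identity. The gluing lemma then gives a homotopy equivalence on homotopy pushouts, and the homotopy pushout of $\ast \leftarrow K_1\cap K_2 \rightarrow \ast$ is precisely the unreduced suspension $\Sigma(K_1\cap K_2)$. Hence $K \simeq \Sigma(K_1\cap K_2)$. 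Equivalently, one can argue via a cofiber computation: $K \to K/K_2 \cong K_1/(K_1\cap K_2)$ is a homotopy equivalence since $K_2$ is contractible, and $K_1/(K_1\cap K_2)\simeq \Sigma(K_1\cap K_2)$ since $K_1$ is contractible; both routes rest on the same principle.

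The main obstacle is not a computation but the justification that one may replace the contractible subcomplexes by points without altering the homotopy type of their union. This is exactly the homotopy invariance of the pushout, and it hinges on the inclusions $K_1\cap K_2 \hookrightarrow K_i$ being cofibrations; for simplicial complexes this holds automatically, but it is the step requiring care, since the naive quotient by a contractible subspace need not be a homotopy equivalence absent a cofibration hypothesis. A minor point to handle separately is the possibly empty intersection, where $K$ is disconnected and the convention $\Sigma(\emptyset)=S^{0}$ keeps the statement correct.
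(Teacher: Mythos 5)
Your proof is correct; the paper does not actually prove this lemma but simply cites it from Barmak, and your homotopy-pushout/gluing-lemma argument (together with the alternate cofiber route via $K\simeq K/K_2\cong K_1/(K_1\cap K_2)\simeq \Sigma(K_1\cap K_2)$, which is essentially Barmak's own phrasing) is the standard proof. The cofibration justification and the handling of the empty-intersection case are exactly the right points to flag, so there are no gaps.
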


We can now show that the Morse complex of all trees is a suspension.

\begin{prop}\label{prop: tree suspension}
    Let $T$ be a tree.  Then $\M(T)$ has the homotopy type of a suspension.
\end{prop}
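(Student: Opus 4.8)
The plan is to realize $\M(T)$ as a union of two collapsible subcomplexes and then invoke Lemma \ref{lem: two contractible suspension}. Since $T$ is a tree, Lemma \ref{lem: flag} guarantees that $\M(T)$ is flag, so that pairwise compatibility of primitive gradient vector fields is the same as joint compatibility; this is precisely what will let me recognize stars cleanly as subcomplexes. The two collapsible pieces will be the stars of two carefully chosen vertices of $\M(T)$, and essentially all of the work is to choose them so that their union is everything. I will assume $T$ has at least one edge (otherwise $\M(T)=\emptyset$).

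Concretely, I would fix a leaf $\ell$ of $T$ together with its unique neighbor $w$, and consider the two vertices $x=(\ell)w$ and $y=(w)\ell$ of $\M(T)$ determined by the edge $e=\{\ell,w\}$. I claim $\M(T)=\st(x)\cup\st(y)$. To verify this, take any gradient vector field $V$ on $T$ and examine how it treats $e$. If $e$ is paired with $w$ in $V$ (that is, $y\in V$), then $V\in\st(y)$. Otherwise $e$ is either unpaired in $V$ or paired with $\ell$; in the latter case $x\in V$, and in the former case, because $\ell$ is a leaf, its only incident edge is $e$, so $\ell$ is unpaired in $V$ and hence $x=(\ell)w$ is compatible with every pair of $V$. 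By flagness, $V\cup\{x\}$ is again a gradient vector field, so $V\in\st(x)$. In every case $V$ lies in $\st(x)\cup\st(y)$.

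To finish, I note that each of $\st(x)$ and $\st(y)$ is the star cluster of a single vertex, hence a cone, and in particular collapsible (this is the content of Proposition \ref{prop: star cluster collapsible} for a $0$-simplex). Lemma \ref{lem: two contractible suspension} then applies verbatim and yields $\M(T)\simeq\Sigma\bigl(\st(x)\cap\st(y)\bigr)$, a suspension, as claimed.

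The main obstacle — and the only place the tree hypothesis does real work beyond supplying flagness — is the covering claim $\M(T)=\st(x)\cup\st(y)$. The crucial feature of a leaf edge is that it admits essentially one competing way to be matched, so that failure to be compatible with $x$ forces $y\in V$. This genuinely fails for an interior edge $\{a,b\}$: there one can form the gradient vector field $\{(a)c,(b)d\}$ with $c\sim a$, $c\neq b$ and $d\sim b$, $d\neq a$, which is incompatible with both $(a)b$ and $(b)a$ and hence lies in neither star. So selecting the pair coming from a leaf is essential. I would also record the degenerate case $T=P_2$, where $x,y$ are incompatible, $\st(x)\cap\st(y)=\emptyset$, and $\M(T)\simeq\Sigma(\emptyset)=S^0$, which is consistent with the conclusion.
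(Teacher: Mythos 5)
Your proof is correct, and it follows the paper's overall strategy --- cover $\M(T)$ by two collapsible subcomplexes and apply Lemma \ref{lem: two contractible suspension} --- but with a genuinely different choice of the two pieces. The paper takes the star clusters $\SC(\sigma_0)$ and $\SC(\sigma_1)$ of the two \emph{maximal} gradient vector fields rooted at a leaf $v_0$ and at its neighbor $v_1$ (whose existence requires citing the rooted-vector-field result of Rand--Scoville), and gets their collapsibility from flagness via Proposition \ref{prop: star cluster collapsible}; the covering claim $\M(T)=\SC(\sigma_0)\cup\SC(\sigma_1)$ is left as ``easy to see.'' You instead take the stars of the two single vertices $(\ell)w$ and $(w)\ell$ of $\M(T)$ coming from the two orientations of one leaf edge. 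This buys you two things: collapsibility is immediate (a vertex star is a cone, no flagness needed for that step), and the covering argument becomes a clean three-way case analysis on how a gradient vector field treats the edge $\{\ell,w\}$, with flagness entering only to upgrade pairwise compatibility of $V\cup\{(\ell)w\}$ to joint compatibility in the unpaired case. Your observation that the argument fails for an interior edge correctly isolates where the leaf hypothesis is used, and your handling of $P_2$ confirms the degenerate case. The trade-off is that the paper's larger star clusters yield a potentially more structured intersection $\M_1(T)\cap\M_2(T)$ to analyze further, whereas $\st((\ell)w)\cap\st((w)\ell)$ is the smaller, more opaque complex of gradient vector fields avoiding the leaf edge entirely; for the bare statement that $\M(T)$ is a suspension, either works.
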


\begin{proof} We apply Lemma \ref{lem: two contractible suspension} by constructing two collapsible subcomplexes of $\M(T)$ whose union is all of $\M(T)$.
Pick any leaf $\{v_0, v_0v_1\}$ of $T$ and consider the maximum gradient vector field $\sigma_0$ rooted in $v_0$ and the maximum gradient vector field rooted in $v_1$ \cite[Proposition 3.3]{RandScoville20}. These corresponds to simplices $\sigma_0, \sigma_1 \in \M(T)$, respectfully. Define $\M_1(T) = \mathrm{SC}_{\M(T)}(\sigma_0)$ and $\M_2(T) = \SC_{\M(T)}(\sigma_1)$. Then $\M_1(T)$ and $\M_2(T)$ are collapsible subcomplexes of $\M(T)$ by Lemma \ref{lem: flag}.  Furthermore, it is easy to see that $\M(T) = \M_{1}(T) \cup \M_{2}(T)$. Thus $\M(T) \simeq \Sigma(\M_{1}(T) \cap \M_{2}(T))$.

\end{proof}

In addition to the general structure of the Morse complex of a tree, we can use Proposition \ref{prop: star cluster collapsible} and  Lemma \ref{lem: cluster lemma} to compute the homotopy type of some specific classes of trees.  Our first computation is the homotopy type of the Morse complex of a path.  This was originally computed by Kozlov in \cite{Kozlov99}.  Here we provide an alternate computation in a first illustration of our technique.



\begin{prop} \label{prop: path homotopy}
    Let $P_t$ be the path on $t$ vertices, $t\geq 3$.  Then
    \begin{gather*}
       \mathcal{M}(P_t) \simeq  {\begin{cases}
                                *  &\text{if } t = 3n \\
                                \mathbb{S}^{2n-1} &\text{if } t = 3n+1\\
                                \mathbb{S}^{2n} &\text{if } t = 3n+2 \\
                                \end{cases}
                                }
    \end{gather*}
\end{prop}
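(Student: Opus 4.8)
The plan is to apply the Cluster Lemma (Lemma \ref{lem: cluster lemma}) to build an explicit acyclic matching on $\M(P_t)$ so that, after counting critical cells, Theorem \ref{thm: Forman} yields the stated homotopy type. The decomposition I would use is driven by how primitive gradient vector fields behave at one end of the path. Fix the leaf $v_0$ with incident edge $v_0v_1$. A vertex of $\M(P_t)$ is a primitive gvf, and near $v_0$ there are only a few ways a gvf can interact with the simplices $v_0$ and $v_0v_1$: the edge $v_0v_1$ may be matched ``inward'' as $(v_1)v_0v_1$, the vertex $v_0$ may be matched outward as $(v_0)v_0v_1$, or $v_0v_1$ may be unmatched and $v_0$ critical. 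I would index the pieces $\Delta_\sigma$ of the decomposition by this local behavior at the leaf, ordered so that the piece in which the leaf contributes nothing sits at the bottom, giving a poset $P$ with a unique minimal element and satisfying the downward-closure condition (2) of Lemma \ref{lem: cluster lemma}.

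The engine inside each block is the collapsibility coming from the flag/star-cluster structure. Since $T=P_t$ is a tree, $\M(P_t)$ is flag by Lemma \ref{lem: flag}, so by Proposition \ref{prop: star cluster collapsible} the star cluster of any simplex is collapsible and hence carries a perfect acyclic matching (one critical vertex, nothing else). The strategy is therefore: in each block $\Delta_\sigma$ corresponding to ``the leaf does something nontrivial,'' the choice at the leaf is forced, and the remaining freedom lives on the gvfs of the smaller path $P_{t-2}$ or $P_{t-1}$ obtained by deleting the decided simplices near $v_0$. This is where I expect the induction to live: match these blocks using the inductively supplied matching on a shorter path, which suspends the critical-cell count by raising dimension by the one or two regular pairs committed at the leaf. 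The bottom block, where the leaf is entirely critical, should collapse completely, contributing only the global minimum.

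Carrying it out, I would (i) set up the three-way local case analysis at $v_0$ and verify the two hypotheses of the Cluster Lemma for the resulting poset; (ii) in the top blocks, exhibit a bijection between gvfs there and gvfs on $P_{t-2}$ (having committed the matched pair $(v_1)v_0v_1$ together with the forced removal of $v_1$'s other incidences) and on $P_{t-1}$, so the inductive matching transports over; (iii) assemble the pieces via Lemma \ref{lem: cluster lemma} into a global acyclic matching; and (iv) solve the resulting recurrence for the number of top-dimensional critical cells, tracking how the dimension shift of $+2$ per two-step reduction produces the residues $t\equiv 0,1,2 \pmod 3$ and the sphere dimensions $2n-1$ and $2n$. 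The base cases $P_3$ (contractible) and $P_4,P_5$ would be checked by hand to seed the recurrence.

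The main obstacle will be step (ii): making the reduction to the shorter path genuinely clean. The subtlety is that committing a regular pair at the leaf does not simply delete two vertices of the graph, because the edge $v_0v_1$ and the vertex $v_1$ interact with the primitive gvfs of the rest of the path (for instance, $v_1$ may still be matched with $v_1v_2$, or may be critical). I would need to argue carefully that once the leaf behavior is fixed, the admissible extensions are in acyclic-matching-preserving bijection with $\M(P_{t-2})$ (respectively a cone over $\M(P_{t-1})$ that collapses away), so that no spurious gradient paths are introduced when the blockwise matchings are glued. Getting the poset order and the downward-closed unions right so that condition (2) holds while the dimension bookkeeping still gives exactly one critical vertex and the correct count of top cells is the delicate part; everything else is bookkeeping once that reduction is established.
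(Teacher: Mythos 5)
Your toolkit (Cluster Lemma, star-cluster collapsibility of the flag complex $\M(P_t)$, Theorem \ref{thm: Forman}) is the same as the paper's, but the decomposition you propose contains a concrete error that the rest of the argument cannot absorb. The bottom block of your poset --- the simplices built from primitive gradient vector fields involving neither $v_0$ nor $v_0v_1$ --- is the full subcomplex of $\M(P_t)$ spanned by the primitive gradient vector fields supported on $v_1,\dots,v_{t-1}$, i.e.\ it is isomorphic to $\M(P_{t-1})$. This block is not a star cluster, so Proposition \ref{prop: star cluster collapsible} does not apply to it, and it is not collapsible in general: for $t=6$ it is $\M(P_5)\simeq \mathbb{S}^2$. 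Hence your claim that the bottom block ``collapses completely, contributing only the global minimum'' fails. More structurally, your recursion passes from $t$ to $t-1$ and $t-2$, which cycles through all three residues mod $3$; the inductively supplied matchings therefore leave critical cells in several different dimensions (for $t=6$, a critical $2$-cell from the bottom block that would have to cancel against critical cells from the other blocks, since $\M(P_6)\simeq *$). The special case of Theorem \ref{thm: Forman} --- the only mechanism available here for reading off the homotopy type --- requires all positive-dimensional critical cells to lie in a single dimension, so even a correctly assembled version of your matching would not determine $\M(P_t)$ without an extra cancellation argument (Morse boundary maps, or re-matching across blocks). This is exactly the difficulty you flag in step (ii), but it is not a technicality; it is where the proof currently breaks.

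For contrast, the paper avoids any induction: it takes $\sigma_0$ to be the star cluster of the single maximal gradient vector field $\{(v_0)v_1,(v_1)v_2,\dots,(v_{t-2})v_{t-1}\}$, which is collapsible by Proposition \ref{prop: star cluster collapsible} and Lemma \ref{lem: flag} and swallows most of the complex at once. The complement is then carved into $n$ blocks, each consisting of the remaining simplices containing one designated primitive gradient vector field chosen alternately from the two ends of the path; the set-differencing in the definition $\Delta_k=\sigma_k-\bigcup_{j<k}\sigma_j$ guarantees that a fixed adjacent primitive gradient vector field is compatible with every simplex of the block, so toggling it yields a perfect cone-type matching with no critical cells. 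What survives is either empty or a single explicit top-dimensional simplex, and Theorem \ref{thm: Forman} applies immediately. If you want to retain an inductive scheme, reduce $t$ by $3$ at a time (deleting a whole segment of the path), so that the residue mod $3$ is preserved and the dimension of the critical cell shifts uniformly by $2$; reducing by $1$ and $2$ mixes the three cases and defeats the counting argument.
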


\begin{proof}
    We apply the Cluster Lemma. In order to do so, we decompose $\M(P_t)$ into collections $\Delta_k$. First, we construct collections of sub-simplices $\sigma_i$ for $i = 0, \ldots n$. We construct collections as follows:
    \begin{enumerate}
        \item Let $\sigma_0 := \mathrm{SC}((v_0)v_1, (v_1)v_2, \ldots, (v_{t-3})v_{t-2}, (v_{t-2})v_{t-1})$
        \item For $1 \leq j \leq n$, we define the following:
            \begin{enumerate}
                \item When $j=2k-1$, let $\sigma_j := \mathrm{st}((v_{t-(3k-1)})v_{t-3k})$
                \item When $j=2k$, let $\sigma_j := \mathrm{st}((v_{3k})v_{3k-1})$
            \end{enumerate}
        \item Let $\sigma_{n+1} := \M(P_t) - \bigcup_{i=0}^{n}{\sigma_{i}}$
    \end{enumerate}
    Now define $\Delta_0 := \sigma_0$ and $\Delta_k := \sigma_k - \bigcup_{j=0}^{k-1}{\sigma_j}$, and observe that $\bigcup_{k=0}^{n+1}{\Delta_k} = \M(P_t)$. Define an acyclic matching on each $\Delta_j$ as follows:

    We know that $\Delta_0$ is collapsible by Proposition \ref{prop: star cluster collapsible} and Lemma \ref{lem: flag} so $\Delta_0$ has an acylcic matching with a single unmatched $0$-simplex.

    Let $j=2k-1$, $1 \leq j \leq n$.  Any simplex $V\in \Delta_j$ by definition contains $(v_{t-(3k-1)})v_{t-1}$. Match $V$ with $V\cup \{(v_{t-(3k-2)})v_{t-(3k-1)}\}$ (or $V-\{(v_{t-(3k-2)})v_{t-(3k-1)}\}$ if $V$ already contains this vector). In this way, all simplices in $\Delta_{2k-1}$ are matched with no unmatched simplices. Furthermore, since this matching is a subset of the matching on the cone on $(v_{t-(3k-1)})v_{t-1}$, it is acyclic.

    Let $j=2k$, $2 \leq j \leq n$. Any simplex $V\in \Delta_j$ by definition contains $(v_{3k})v_{3k-1}.$ Match $V$ with $V\cup \{(v_{3k-1})v_{3k-2}\}$ (or $V$ with this vector removed, as above). In this way, all simplices in $\Delta_{2k}$ are matched with no unmatched simplices. Again, this matching is a subset of the matching on a cone so it is acyclic.

    Now consider $\Delta_{n+1}$. We have three cases:

    (Note: When considering $n=1$ in cases 2 and 3, disregard matchings containing vertices with negative indices e.g. $v_{-1}$)
    \begin{description}
    \item[Case 1]
        Let $t=3n$. Then $\Delta_{n+1} = \emptyset$, and thus $\M({P_t}) \simeq *$.


    \item[Case 2]
        Let $t=3n+1$. Then $\Delta_{n+1}$ contains a single simplex $V$ of dimension $(2n-1)$ satisfying
        \begin{align*}
           (v_{3\floor{\frac{n}{2}}})v_{3\floor{\frac{n}{2}}-1}, (v_{3\floor{\frac{n}{2}}+2})v_{3\floor{\frac{n}{2}}+1} \not\in V.
        \end{align*}
         Thus by Theorem \ref{thm: Forman}, $\M({P_t}) \simeq \mathbb{S}^{2n-1}$.




    \item[Case 3]

        Let $t=3n+2$.  Then $\Delta_{n+1}$ contains a single simplex $V$ of dimension $2n$ satisfying
 \begin{align*}
           (v_{3\floor{\frac{n}{2}}})v_{3\floor{\frac{n}{2}}-1}, (v_{3\floor{\frac{n}{2}}+3})v_{3\floor{\frac{n}{2}}+2} \not\in V.
        \end{align*}
        Thus by Theorem \ref{thm: Forman}, $\M({P_t}) \simeq \mathbb{S}^{2n}$.




     \end{description}
\end{proof}

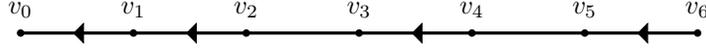
\begin{figure} [h]

\begin{tikzpicture}[scale=1.5, node distance = {20mm}, thick, main/.style = {draw, circle}]
\begin{scope}[very thick, every node/.style={sloped,allow upside down}]
		
            \node[label={$v_{0}$}, inner sep=1pt, circle, fill=black](v3) at (-2,0) {};
            \node[label={$v_{1}$}, inner sep=1pt, circle, fill=black](v4) at (-1,0) {};
            \node[label={$v_{2}$}, inner sep=1pt, circle, fill=black](v5) at (0,0) {};
            \node[label={$v_{3}$}, inner sep=1pt, circle, fill=black](v6) at (1,0) {};
            \node[label={$v_{4}$}, inner sep=1pt, circle, fill=black](v7) at (2,0) {};
            \node[label={$v_{5}$}, inner sep=1pt, circle, fill=black](v7) at (3,0) {};
            \node[label={$v_{6}$}, inner sep=1pt, circle, fill=black](v7) at (4,0) {};

		    \draw (-1,0) -- node {\midarrow} (-2,0);
		    \draw (0,0) -- node {\midarrow} (-1,0);
		    \draw (0,0) -- (1,0);
		    \draw (2,0) -- node {\midarrow} (1,0);
		    \draw (2,0) -- (3,0);
		    \draw (4,0) -- node {\midarrow} (3,0);
\end{scope}
\end{tikzpicture}

\caption{For $t=7=3(2)+1$, Case 2 of the proof of Theorem \ref{prop: path homotopy} implies that $V\in \Delta_3$ will result in the gradient vector field (critical simplex) in $\M(P_7)$ pictured above.} \label{lem paths example}


\end{figure}

Recall that the \textbf{star graph} $S_n$ on $n+1$ vertices is the complete bipartite graph $K_{1,n}$.  Alternatively, we may view $S_n$ as the result of taking $n$ paths of length 1 and gluing them to a common endpoint (the so-called wedge product). We generalize $S_n$ in the following definition.

\begin{defn}
    An \textbf{extended star graph}, denoted $S_{v_1,v_2,v_3}$, is the graph obtained by starting with $v_1$ paths of length 1, $v_2$ paths of length 2, and $v_3$ paths of lengths 3 and identifying an endpoint of each path with a fixed vertex $c$ called the \textbf{center}.
    By an \textbf{extended leaf of length $k$}, we mean a path of length $k$ from the center vertex, $c$, to a vertex, $v_k$, of degree $1$.
\end{defn}

Clearly $S_k=S_{k,0,0}$ recovers the star graph.  It was shown in \cite[Proposition 3.5]{donovan_lin_scoville_2022} that not only is $\M(S_n)$ (strongly) collapsible for $n\geq 2$, but that any complex with at least two leaves sharing a common vertex is strongly collapsible.  Hence, we let $v_0=0$ in our computation below.

\begin{center}
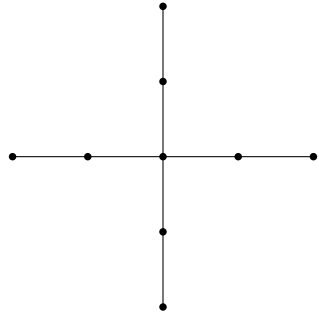
\begin{figure} [h]

\begin{tikzpicture}
    \node[inner sep=1pt, circle, fill=black](v3) at (0,0) {};
    \node[inner sep=1pt, circle, fill=black](v3) at (1,0) {};
    \node[inner sep=1pt, circle, fill=black](v3) at (2,0) {};
    \node[inner sep=1pt, circle, fill=black](v3) at (0,1) {};
    \node[inner sep=1pt, circle, fill=black](v3) at (0,2) {};
    \node[inner sep=1pt, circle, fill=black](v3) at (-1,0) {};
    \node[inner sep=1pt, circle, fill=black](v3) at (-2,0) {};
    \node[inner sep=1pt, circle, fill=black](v3) at (0,-1) {};
    \node[inner sep=1pt, circle, fill=black](v3) at (0,-2) {};

    \draw (0,0) -- (1,0);
    \draw (1,0) -- (2,0);
    \draw (0,0) -- (0,1);
    \draw (0,1) -- (0,2);
    \draw (0,0) -- (-1,0);
    \draw (-1,0) -- (-2,0);
    \draw (0,0) -- (0,-1);
    \draw (0,-1) -- (0,-2);

\end{tikzpicture}

    \caption{The extended star graph, $S_{0,4,0}$. By corollary [\ref{cor: ex star 2}], we see that $\M(S_{0,4}) \simeq \mathbb{S}^{4} \vee \mathbb{S}^{4} \vee \mathbb{S}^{4}$.}

\end{figure}
\end{center}

\begin{thm}  \label{thm: extended star}
    Let $S_{0,n,m}$ be an extended star graph. Then,
    \begin{align*}
        \M(S_{0,n,m}) \simeq \vee^{n-1}\mathbb{S}^{2m+n}
    \end{align*}
\end{thm}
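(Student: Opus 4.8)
The plan is to apply the Cluster Lemma (Lemma \ref{lem: cluster lemma}) exactly as in the proof of Proposition \ref{prop: path homotopy}, but now organized around the center vertex $c$ of the extended star graph $S_{0,n,m}$. The overall strategy is to decompose $\M(S_{0,n,m})$ into collections indexed by a poset, equip each collection with an acyclic matching (most of them fully matched, using the cone trick where a simplex $V$ is matched with $V \cup \{w\}$ or $V \setminus \{w\}$ for a fixed vector $w$), and then identify the unmatched critical simplices. By Theorem \ref{thm: Forman}, the homotopy type will follow once I show there is a single $0$-dimensional critical simplex together with exactly $n-1$ critical simplices all of dimension $2m+n$.

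First I would set up the combinatorics of primitive gradient vector fields incident to the center $c$. Since $c$ has degree $n+m$ (one edge into each of the $n$ length-$2$ legs and each of the $m$ length-$3$ legs), the key constraint near $c$ is that at most one edge at $c$ can be used as a regular pair involving $c$, and no closed $V$-path may form; this is where the ``at least two leaves'' obstruction from \cite{donovan_lin_scoville_2022} is sidestepped by having no length-$1$ legs ($v_0=0$). I would first build a star-cluster collection $\sigma_0 := \mathrm{SC}(\cdots)$ collapsing the bulk of the complex to a single $0$-simplex via Proposition \ref{prop: star cluster collapsible} and Lemma \ref{lem: flag} (the graph is a tree, so $\M$ is flag). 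Then, analogously to the path case, I would peel off collections $\sigma_j$ that are stars $\mathrm{st}(w)$ of carefully chosen vectors along the legs, each contributing a fully matched (hence critical-free) piece. The dimension count $2m+n$ should emerge naturally: each of the $m$ length-$3$ legs contributes two compatible vectors to a maximal critical gradient vector field, each of the $n$ length-$2$ legs contributes one vector, and the flexibility at the branch point $c$ contributes the remaining dimension, giving a top cell of dimension $2m + n$.

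The heart of the argument is to produce exactly $n-1$ critical simplices in the residual collection $\sigma_{n+1} := \M(S_{0,n,m}) - \bigcup \sigma_i$, each of top dimension $2m+n$. The factor $n-1$ (rather than $n$) reflects a ``wedge of suspensions'' phenomenon consistent with Proposition \ref{prop: tree suspension}: the $n$ length-$2$ legs are symmetric, and choosing which leg carries the ``outward'' vector at $c$ gives $n$ configurations subject to one relation, leaving $n-1$ independent critical cells. I would verify that each surviving critical simplex is an honest gradient vector field (acyclicity: no closed $V$-path can arise because the underlying graph is a tree), that it has the claimed dimension $2m+n$, and that no two of the peeling matchings collide — i.e.\ the collections satisfy conditions (1) and (2) of Lemma \ref{lem: cluster lemma} with respect to the chosen poset order.

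The hard part will be bookkeeping the interaction at the center vertex $c$: unlike the path, where the matching vectors propagate linearly down a single line, here the $n+m$ legs all meet at $c$ and compete for the single ``slot'' that an edge at $c$ can occupy in a gradient vector field. I expect the main obstacle to be showing cleanly that this competition yields precisely $n-1$ critical top cells and not some other count, and in particular verifying that the chosen matchings on the length-$2$ legs versus the length-$3$ legs can be glued without creating spurious unmatched simplices or breaking the subcomplex condition (2). Handling the base cases and the parity/indexing subtleties near $c$ (reminiscent of the $\floor{n/2}$ bookkeeping in Proposition \ref{prop: path homotopy}) will require care, but once the critical simplices are enumerated, Theorem \ref{thm: Forman} delivers $\M(S_{0,n,m}) \simeq \vee^{n-1}\mathbb{S}^{2m+n}$ immediately.
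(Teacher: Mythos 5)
Your overall strategy --- a star cluster rooted at the center $c$, the Cluster Lemma, cone-type toggle matchings on the peeled-off collections, and Theorem \ref{thm: Forman} applied to $n-1$ critical cells of dimension $2m+n$ --- is exactly the route the paper takes, and your dimension count for the top cells is correct. The problem is that the proposal stops precisely where the proof has to begin: you never write down the decomposition or the matchings, and the step you yourself flag as ``the hard part'' (why the competition at $c$ yields exactly $n-1$ critical top cells) is the entire mathematical content of the theorem. As it stands, the count $n-1$ is asserted via a heuristic (``$n$ configurations subject to one relation'') rather than derived.

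The missing idea is a concrete asymmetry between the two kinds of legs. Take $\sigma_0=\SC(\sigma)$ where $\sigma$ is the maximal gradient vector field rooted at $c$; what survives outside $\sigma_0$ are the gradient vector fields that block every inward arrow, which forces $(v_{a_i})v_{b_i}$ on each length-$2$ leg and $(v_{\alpha_j})v_{\beta_j}$ on each length-$3$ leg. If such a field uses the edge of $c$ into a length-$3$ leg, i.e.\ contains $(c)v_{\gamma_j}$, then the middle arrow $(v_{\gamma_j})v_{\alpha_j}$ becomes a free toggle (its presence or absence does not affect membership in the collection), so these simplices --- the paper's $\Delta_1$, of which there are exactly $2m$ --- admit a perfect acyclic matching and contribute nothing critical. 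On a length-$2$ leg there is no such free toggle: once $(c)v_{a_i}$ is added, every other arrow is forced. Hence the residual collection consists of exactly $n+1$ simplices, namely the common $(2m+n-1)$-dimensional base $\bigl(\bigcup_i (v_{a_i})v_{b_i}\bigr)\cup\bigl(\bigcup_j (v_{\gamma_j})v_{\alpha_j}\bigr)\cup\bigl(\bigcup_j (v_{\alpha_j})v_{\beta_j}\bigr)$ together with its $n$ cofaces obtained by adjoining one $(c)v_{a_i}$; only one coface can be matched against the base, leaving $n-1$ critical $(2m+n)$-cells. Without this enumeration, and without checking condition (2) of Lemma \ref{lem: cluster lemma} for the chosen order, the conclusion does not follow. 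Note also that only two collections beyond $\sigma_0$ are needed (one for the $(c)v_{\gamma_j}$ simplices, one residual), not a chain $\sigma_1,\dots,\sigma_{n+1}$ as your indexing suggests; the path-style $\floor{n/2}$ bookkeeping you anticipate does not arise here.
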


\begin{proof}
    Define a collection of subsimplices $\sigma_i$ for $i = 0, \ldots n$ on $\M(S_{0,n,m})$ as follows:

    Let $c$ be the center vertex of $S_{0,n,m}$ and let $\{v_{a_i}v_{b_i}, v_{b_i}\}$ be the leaf of each extended leaf of length 2, $i=1,2,\ldots, n$, and $\{v_{\alpha_j}v_{\beta_j}, v_{\beta_j}\}$ the leaf of each extended leaf of length 3, $j=1, 2,\ldots, m$ with $v_{\gamma_j} \not= v_{\beta_j}$ the other neighbor of $v_{\alpha_j}$.
    \begin{enumerate}
        \item  Let $\sigma_0$ be the star cluster of the gradient vector field rooted in $c$. Such a gradient vector field exists and is unique by \cite[Proposition 3.3]{RandScoville20}.
        \item Let $\sigma_1 := \cup_{i=1}^{m}\mathrm{st}(\{(c)v_{\gamma_i}\})$
    \end{enumerate}

    Now define $\Delta_0 := \sigma_0, \Delta_1 := \sigma_1 - \sigma_0$, and $\Delta_2:=\M(S_{0,n,m})-(\sigma_0\cup \sigma_1)$. Clearly $\Delta_0\cup\Delta_1\cup\Delta_2= \M(S_{0,n,m})$ so we can apply the Cluster Lemma. We define an acyclic matching on each $\Delta_j$ as follows:

    First, $\Delta_0$ is collapsible by Proposition \ref{prop: star cluster collapsible} and Lemma \ref{lem: flag} so there is an acyclic matching on $\Delta_0$ with a single critical 0-simplex.

    To construct a matching on $\Delta_1$, we first observe that a typical element of $\Delta_1$ is of the form $(c)v_{\gamma_i}$ along with other arrows pointing away from the center vertex $c$. Furthermore, because $\sigma_0$ contains all gradient vector fields with any arrow pointing towards $c$, all elements of $\Delta_1$ are not compatible with any arrow pointing towards $c$. Upon inspection, there are exactly $2m$ such gradient vector fields. Match the $(2m+n-1)$-simplex of $\Delta_1$ containing $(c)v_{\gamma_i}$ but not containing $(v_{\gamma_i})v_{\alpha_i}$ to the corresponding $(2m+n)$-simplex containing both $(c)v_{\gamma_i}$ and $(v_{\gamma_i})v_{\alpha_i}$. This produces an acyclic matching on all elements in $\Delta_1$.

    Lastly, observe that $\Delta_2$ contains $n+1$ elements. We will create a single matching, leaving $n-1$ unmatched $(2m+n)$-simplices and hence critical. A typical element of $\Delta_2$ is of the form $(\bigcup_{i=1}^n(v_{a_i})v_{b_i})\cup (\bigcup_{i=1}^m(v_{\gamma_i})v_{\alpha_i})\cup (\bigcup_{i=1}^m(v_{\alpha_i})v_{\beta_1})$ along with possibly one of $(c)v_{a_i}$. Match the $(2m+n-1)$-simplex of $\Delta_2$ containing none of the $(c)v_{a_i}$ with the $(2m+n)$-simplex containing $(c)v_{a_1}$.

If $n>1$ then, there are $n-1$ unmatched $(2m+n)$-simplices $\tau_i$, where each $\tau_i$ contains $(c)v_{a_i}$ for $i=2,3,\ldots, n-1$.  Thus $\M(S_{0,n,m}) \simeq \vee^{n-1}\mathbb{S}^{2m+n}$.

\end{proof}

We obtain several special cases which we list as corollaries below.

\begin{cor}
    Let $S_{0,1,n}$ be an extended star graph. Then,
    \begin{align*}
        \M(S_{0,1,n}) \simeq *
    \end{align*}
\end{cor}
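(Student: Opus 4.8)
The plan is to recognize that $S_{0,1,n}$ is the special case $m=n$, $n=1$ of the main theorem. Substituting directly into the formula $\M(S_{0,n',m'}) \simeq \vee^{n'-1}\mathbb{S}^{2m'+n'}$ with $n'=1$ gives a wedge of $n'-1 = 0$ spheres, which is the empty wedge. An empty wedge of spheres is a point, so $\M(S_{0,1,n}) \simeq *$. Thus the corollary follows immediately from Theorem \ref{thm: extended star}.

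To present this cleanly rather than as a bare substitution, I would retrace the final matching argument in the proof of Theorem \ref{thm: extended star} to confirm the degenerate case behaves correctly. With $n'=1$, there is a single extended leaf of length 2 and $m'=n$ extended leaves of length 3. The decomposition into $\Delta_0, \Delta_1, \Delta_2$ goes through as before. The key point is the handling of $\Delta_2$: the argument produces a single matching pairing the $(2m'+n'-1)$-simplex containing none of the $(c)v_{a_i}$ with the $(2m'+n')$-simplex containing $(c)v_{a_1}$. Since there is only one length-2 leaf, there is exactly one generator $(c)v_{a_1}$, so $\Delta_2$ consists of precisely these two simplices, and both get matched. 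Hence $\Delta_2$ contributes no critical simplices at all.

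Consequently the only critical simplex in the entire acyclic matching on $\M(S_{0,1,n})$ is the single critical $0$-simplex coming from the collapsibility of $\Delta_0$ (via Proposition \ref{prop: star cluster collapsible} and Lemma \ref{lem: flag}), while $\Delta_1$ contributes no critical simplices by its cone-based matching. By the special case of Theorem \ref{thm: Forman} with $m_0 = 1$ and $m_j = 0$ for all $j \neq 0$, the complex has the homotopy type of a CW complex with a single $0$-cell, which is contractible. Therefore $\M(S_{0,1,n}) \simeq *$.

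I do not expect any genuine obstacle here, since this is purely a boundary instance of the theorem already proved. The only point requiring mild care is confirming that the "$n>1$" clause at the end of the main proof is what produces the nonempty wedge, so that the complementary case $n'=1$ genuinely yields no unmatched top-dimensional simplices; once that is checked, the contractibility is immediate. I would therefore write the corollary's proof as a one-line appeal to Theorem \ref{thm: extended star} with the substitution $n'=1$, noting that the wedge over an empty index set is a point.
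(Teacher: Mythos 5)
Your proposal is correct and matches the paper's (implicit) argument: the corollary is stated without proof as an immediate specialization of Theorem \ref{thm: extended star} with one extended leaf of length $2$, where the wedge $\vee^{0}\mathbb{S}^{2m+1}$ over an empty index set is a point. Your extra check that the $\Delta_2$ matching leaves no critical simplices when there is only one length-$2$ leaf is a sound verification of the same mechanism, not a different route.
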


\begin{cor} \label{cor: ex star 2}
    Let $S_{0,n}$ be an extended star graph. Then,
    \begin{align*}
        \M(S_{0,n}) \simeq \vee^{n-1}\mathbb{S}^{n}
    \end{align*}
\end{cor}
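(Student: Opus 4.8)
The plan is to obtain Corollary \ref{cor: ex star 2} as the $m=0$ specialization of Theorem \ref{thm: extended star}. Following the naming convention established for the ordinary star graph (where $S_k = S_{k,0,0}$), the symbol $S_{0,n}$ abbreviates the extended star graph $S_{0,n,0}$, namely the graph with $n$ extended leaves of length $2$ attached at the center $c$ and no extended leaves of length $3$. So the first step is simply to record the identification $S_{0,n} = S_{0,n,0}$.

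With this in hand, I would substitute $m = 0$ directly into the homotopy equivalence
\[
    \M(S_{0,n,m}) \simeq \vee^{n-1}\mathbb{S}^{2m+n}
\]
proved in Theorem \ref{thm: extended star}. This yields $\M(S_{0,n,0}) \simeq \vee^{n-1}\mathbb{S}^{0+n} = \vee^{n-1}\mathbb{S}^{n}$, which is exactly the claimed formula.

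There is no genuine obstacle here, since the statement is an immediate specialization; the only point worth checking is that the argument proving Theorem \ref{thm: extended star} remains valid at $m = 0$. In that case there are no extended leaves of length $3$, so the collection $\sigma_1 = \bigcup_{i=1}^{m}\mathrm{st}(\{(c)v_{\gamma_i}\})$ is empty and hence $\Delta_1 = \emptyset$ contributes no critical simplices; the matching on $\Delta_0$ (collapsible, leaving a single critical $0$-simplex) and the matching on $\Delta_2$ (leaving $n-1$ unmatched simplices, now of dimension $2m+n = n$) are both unaffected. Applying Theorem \ref{thm: Forman} with $m_0 = 1$, $m_n = n-1$, and $m_j = 0$ otherwise then delivers the wedge of $n-1$ copies of $\mathbb{S}^{n}$, confirming the corollary with no edge cases to treat separately.
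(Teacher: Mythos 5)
Your proposal is correct and matches the paper's treatment: the corollary is stated there without proof precisely because it is the immediate $m=0$ specialization of Theorem \ref{thm: extended star}, exactly as you argue. Your additional check that the theorem's decomposition degenerates gracefully at $m=0$ (with $\sigma_1$ an empty union and $\Delta_1=\emptyset$) is a sensible bit of due diligence but introduces nothing beyond what the paper intends.
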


\begin{cor}
    Let $S_{0,0,n}$ be an extended star graph. Then,
    \begin{align*}
        \M(S_{0,0,n}) \simeq \mathbb{S}^{2n-1}
    \end{align*}
\end{cor}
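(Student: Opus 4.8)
The plan is to imitate the Cluster-Lemma argument of Theorem \ref{thm: extended star}, while noting first that the corollary genuinely falls outside the scope of that theorem: the formula $\vee^{n-1}\mathbb{S}^{2m+n}$ was obtained under the standing assumption of at least one leaf of length $2$, and those length-$2$ leaves are exactly what produced the critical top-dimensional simplices there. Since $S_{0,0,n}$ has only length-$3$ leaves, I expect a cleaner two-piece decomposition that leaves a single critical simplex. Write each leaf as $c - v_{\gamma_j} - v_{\alpha_j} - v_{\beta_j}$, and call an arrow on a leaf \emph{inward} if it points toward the center $c$, so that the inward arrows are precisely $(v_{\gamma_j})c$, $(v_{\alpha_j})v_{\gamma_j}$, and $(v_{\beta_j})v_{\alpha_j}$; these are the primitive gradient vector fields comprising the unique gradient vector field rooted at $c$ \cite[Proposition 3.3]{RandScoville20}.

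First I would set $\Delta_0 := \sigma_0 = \SC(\sigma)$, where $\sigma$ is the gradient vector field rooted at $c$, and $\Delta_1 := \M(S_{0,0,n}) \setminus \sigma_0$. Because $\sigma_0$ is a subcomplex, this is a valid decomposition over the two-element chain $0 < 1$, so the Cluster Lemma (Lemma \ref{lem: cluster lemma}) applies. By Proposition \ref{prop: star cluster collapsible} together with Lemma \ref{lem: flag}, $\Delta_0$ is collapsible and hence carries an acyclic matching with a single critical $0$-simplex.

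The heart of the argument is the combinatorial identification of $\Delta_1$. A gradient vector field $V$ lies outside $\sigma_0$ exactly when it is incompatible with every inward arrow, and in particular contains no inward arrow. I would check that incompatibility with $(v_{\beta_j})v_{\alpha_j}$ forces $(v_{\alpha_j})v_{\beta_j} \in V$ on every leaf $j$ (this is the only non-inward arrow sharing a vertex or edge with $(v_{\beta_j})v_{\alpha_j}$); this automatically blocks $(v_{\alpha_j})v_{\gamma_j}$, while blocking $(v_{\gamma_j})c$ forces $V$ to contain $(v_{\gamma_j})v_{\alpha_j}$ or the center arrow $(c)v_{\gamma_j}$. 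Since the center arrows pairwise share $c$, at most one appears, and I expect this to pin $\Delta_1$ down to exactly $2n+1$ cells: the single $(2n-1)$-simplex $\tau^{*} = \{(v_{\gamma_j})v_{\alpha_j}, (v_{\alpha_j})v_{\beta_j}\}_{j=1}^{n}$, together with, for each leaf $j_0$, the two cells obtained from $\tau^{*}$ by adjoining $(c)v_{\gamma_{j_0}}$ and optionally deleting $(v_{\gamma_{j_0}})v_{\alpha_{j_0}}$. I would then match those $2n$ auxiliary cells in $n$ pairs, each pair differing only by the arrow $(v_{\gamma_{j_0}})v_{\alpha_{j_0}}$, leaving $\tau^{*}$ unmatched. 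A short inspection of the face relations in $\Delta_1$—namely that $\tau^{*}$ is a facet of each of the $n$ top cells while each lower auxiliary cell is a facet of only one—shows this matching is acyclic. The Cluster Lemma then yields an acyclic matching on $\M(S_{0,0,n})$ with exactly one critical $0$-simplex and one critical $(2n-1)$-simplex, so the special case of Theorem \ref{thm: Forman} gives $\M(S_{0,0,n}) \simeq \mathbb{S}^{2n-1}$.

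The main obstacle I anticipate is the exhaustive verification that no further gradient vector fields escape $\sigma_0$, i.e.\ that the list above is complete; this rests on each leaf being short enough that once $(v_{\alpha_j})v_{\beta_j}$ is forced the only remaining freedom is the single center arrow. Tracking which arrows share a vertex or an edge, and hence are incompatible, is where the bookkeeping is most error-prone, and it is also exactly what would fail for longer leaves—explaining why this uniform single-sphere statement is special to length $3$.
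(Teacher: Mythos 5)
Your proof is correct and follows essentially the same star-cluster-plus-Cluster-Lemma route as the paper, which obtains this corollary by specializing the proof of Theorem \ref{thm: extended star} to the case of no length-$2$ leaves; your explicit enumeration of the $2n+1$ cells outside $\mathrm{SC}$ of the gradient vector field rooted at $c$, the pairing of the $2n$ cells containing a center arrow, and the resulting single critical $(2n-1)$-simplex $\tau^{*}$ are exactly what that specialization produces. Your observation that the stated formula $\vee^{n-1}\mathbb{S}^{2m+n}$ does not literally specialize here (it would read $\vee^{-1}$) is well taken, and your standalone verification supplies the details the paper leaves implicit.
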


\section{Homotopy Type of the Generalized Morse Complex}\label{sec: Homotopy Type of the Generalized Morse Complex}

The following was defined in \cite{scoville2020higher} in order to estimate the connectivity of the Morse complex.

\begin{defn}
    The \textbf{generalized Morse complex} $\mathcal{G}\M(K)$ of a simplicial complex, $K$, is the simplicial complex whose vertices are the primitive gradient vector fields on $K$, with a finite collection of vertices spanning a simplex whenever the primitive gradient vector fields are pairwise compatible. Reworded, the simplices of $\mathcal{G}\M(K)$ are the discrete vector fields on $K$, with face relation given by inclusion.
\end{defn}

Note that $\mathcal{G}\M(K)$ is a flag complex since it allows closed $V$-paths on $K$. We now compute the homotopy type of the generalized Morse complex for cycles.  First, a definition.

\begin{definition}
   Let $C_t$ be a cycle, $t\geq 3$, with vertices $v_0, \ldots, v_{t-1}$. Let $V_k:=\{(v_{i+1})v_i: k\leq i\leq t-1\}$. We define $\mathrm{st_{mod}}(V_k) := \{ \sigma \in \mathrm{st}(V_k) : (v_{k})v_{k-1} \not \in \sigma\}$.
\end{definition}

\begin{thm} \label{thm: generalized cycle}
 Let $C_t$ be the cycle on $t$ vertices, $t > 3$.  Then
    \begin{gather*}
       \mathcal{G}\M(C_t) \simeq  {\begin{cases}
                                \mathbb{S}^{2n-1} \vee \mathbb{S}^{2n-1} &\text{if } t = 3n \\
                                \mathbb{S}^{2n} &\text{if } t = 3n+1\\
                                \mathbb{S}^{2n} &\text{if } t = 3n+2 \\
                                \end{cases}
                                }
    \end{gather*}
\end{thm}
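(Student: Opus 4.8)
The plan is to imitate the Cluster-Lemma argument used for $\M(P_t)$ in Proposition \ref{prop: path homotopy}, adapted to the fact that $C_t$ closes up into a loop. Since $\mathcal{G}\M(C_t)$ is flag, its star clusters are collapsible (Proposition \ref{prop: star cluster collapsible}), and I will produce an acyclic matching on all of $\mathcal{G}\M(C_t)$ by gluing matchings on a poset-indexed decomposition (Lemma \ref{lem: cluster lemma}); reading off the critical simplices and invoking the special case of Theorem \ref{thm: Forman} will give the homotopy type. Label the primitive gradient vector fields on $C_t$ by $a_i = (v_i)v_{i+1}$ (``forward'') and $b_i = (v_{i+1})v_i$ (``backward''), indices mod $t$. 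Because closed $V$-paths are allowed, the two fully wrapped configurations $W^{+} = \{a_0, \ldots, a_{t-1}\}$ and $W^{-} = \{b_0, \ldots, b_{t-1}\}$ are genuine top-dimensional simplices, and these loops are exactly what make the cyclic case differ from the path.

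First I would fix a collapsible base piece $\Delta_0$, the star cluster of a suitably chosen maximal gradient vector field, which absorbs the bulk of the complex and contributes the single critical $0$-simplex required by Theorem \ref{thm: Forman}. The remaining simplices I would organize using the modified stars $\mathrm{st_{mod}}(V_k)$ just defined: assigning each gradient vector field to the piece determined by the maximal run $b_{t-1}, b_{t-2}, \ldots, b_k$ of consecutive backward arrows meeting at the seam (with $b_{k-1}$ absent) partitions the simplices, and ordering the pieces by the length of this run makes every down-set a subcomplex, so the hypotheses of Lemma \ref{lem: cluster lemma} hold. Each $\mathrm{st_{mod}}(V_k)$ is, as a poset of faces, the join of the forced run $V_k$ with a smaller complex $L_k$ of gradient vector fields on the path $v_0 v_1 \cdots v_k$ from which the two extreme arrows have been deleted; because $V_k$ is a nonempty simplex, a single element-matching on the variable part gives a \emph{perfect} acyclic matching on each such piece, so the middle pieces contribute no critical cells.

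What is left after this peeling is a small remainder piece built around the wrapped configurations, and the whole computation comes down to analyzing it. For $t = 3n+1$ and $t = 3n+2$ I expect the remainder to reduce to a single critical simplex of dimension $2n$, so that $\mathcal{G}\M(C_t) \simeq \mathbb{S}^{2n}$; for $t = 3n$ the three-fold symmetry of the ``every third edge'' matchings should leave \emph{two} critical simplices of dimension $2n-1$, one for each way of wrapping around the cycle, yielding $\mathbb{S}^{2n-1} \vee \mathbb{S}^{2n-1}$. Throughout, the dimension of a critical $d$-simplex equals one less than the size of the corresponding matching on $\mathrm{sd}(C_t) = C_{2t}$, which keeps the bookkeeping honest; as an independent check, the identification $\mathcal{G}\M(C_t) \cong \mathrm{M}(\mathrm{sd}(C_t)) = \mathrm{M}(C_{2t})$ reduces the statement to the known homotopy type of the matching complex of an even cycle, and the three output cases match the residues of $2t$ modulo $3$.

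The main obstacle will be the handling of the cyclic seam. Unlike the path, $C_t$ has no free leaf at which to start the collapse, so I must choose the base star cluster and the run-indexing so that the two top simplices $W^{\pm}$ (and the rest of the top-dimensional part, of dimension $t-1$) get matched rather than surviving as spurious high-dimensional critical cells; the answer lives in dimension roughly $2t/3$, far below the top, so everything near the top must cancel. Verifying that the element-matchings on the $\mathrm{st_{mod}}(V_k)$ remain globally acyclic across the seam, and pinning down precisely why a second $(2n-1)$-sphere appears exactly when $3 \mid t$ and not otherwise, are the two points that will require the most care.
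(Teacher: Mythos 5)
Your toolkit here is the same as the paper's --- the star cluster of the fully wrapped forward configuration as the collapsible base, a poset of modified stars covering the rest, element-toggle matchings glued by the Cluster Lemma, and the special case of Theorem \ref{thm: Forman} --- but your decomposition of the remainder is genuinely different, and it is exactly where the argument breaks. You index the leftover pieces by the maximal run $b_{t-1}, b_{t-2}, \ldots, b_k$ of consecutive backward arrows at the seam (with $b_{k-1}$ absent) and assert that ``a single element-matching on the variable part gives a perfect acyclic matching on each such piece.'' No such toggle exists for these pieces. The only candidate suggested by the defining exclusion is $a_{k-1} = (v_{k-1})v_k$, but $a_{k-1}$ conflicts with $b_{k-2} = (v_{k-1})v_{k-2}$ at the vertex $v_{k-1}$ and with $a_{k-2} = (v_{k-2})v_{k-1}$ at the edge $v_{k-2}v_{k-1}$, and simplices of your piece are free to contain either of these; so $\sigma \mapsto \sigma \cup \{a_{k-1}\}$ is not defined on the whole piece. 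More structurally, the ``variable part'' of your piece is a generalized Morse complex of the complementary path, whose reduced Euler characteristic is generically $\pm 1$; since the simplices of the piece are in bijection with the faces of that link (empty face included), a perfect matching on the piece cannot exist at all. Critical cells therefore leak out of the middle pieces and the final count collapses.

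The repair is to anchor each piece on \emph{two} backward arrows separated by a gap: for $1 \le j \le t-2$ take $\sigma_j = \mathrm{st_{mod}}(\{(v_j)v_{j-1}, (v_{j+2})v_{j+1}\})$. Requiring both $(v_j)v_{j-1}$ and $(v_{j+2})v_{j+1}$ expels from the piece the only two primitive gradient vector fields that could conflict with the gap arrow $(v_{j+1})v_j$ (namely $(v_j)v_{j+1}$, which shares the vertex $v_j$ with the first anchor, and $(v_{j+1})v_{j+2}$, which shares the edge $v_{j+1}v_{j+2}$ with the second), so toggling $(v_{j+1})v_j$ really is a perfect acyclic matching there; your single-anchor runs leave both conflicts alive. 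Two further remarks. First, your analysis of the final remainder piece is only conjectured (``I expect\dots''), not carried out, and that is where the one or two critical cells must be exhibited. Second, your sanity check $\mathcal{G}\M(C_t) \cong \mathrm{M}(\mathrm{sd}(C_t)) = \mathrm{M}(C_{2t})$ is sound, and combined with Kozlov's computation of $\mathrm{M}(C_s)$ the residues of $2t$ modulo $3$ do reproduce all three cases of the statement; promoted from a check to the argument, with \cite{Kozlov99} cited, that would be a complete (if less self-contained) proof, and it is a cleaner route than the run-based decomposition you propose.
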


\begin{proof}
    We decompose $\mathcal{G}\M(C_t)$ into collections $\Delta_k$. We begin by constructing the following collections:
    \begin{enumerate}
        \item Let $\sigma_0 := \mathrm{SC}(\{(v_0)v_1, (v_1)v_2, \ldots , (v_{t-1})v_{0}\})$
        \item For $1 \leq j \leq t-2$, let $\sigma_j := \mathrm{st_{mod}}(\{(v_{j})v_{j-1} (v_{j+2})v_{j+1}\})$

    \end{enumerate}
    Define $\Delta_0 := \sigma_0$ and $\Delta_k := \sigma_k - \bigcup_{j=0}^{k-1}{\sigma_j}$. Then $\bigcup_{k=0}^{t-2}{\Delta_k} = \mathcal{G}\M(C_t)$.  Clearly $\Delta_0$ is collapsible. Now match $k$-simplex of the form $\{(v_{j})v_{j-1} (v_{j+2})v_{j+1} \ldots \}$ with the $(k+1)$-simplex of the form $\{(v_{j})v_{j-1} (v_{j+1})v_j (v_{j+2})v_{j+1} \ldots \}$. There are three cases to consider.

    \begin{description}
    \item[Case 1]
        Let $t=3n$. Then there will be two critical $(2n-1)$-simplices, both of which were excluded from every $\sigma_j$ by the definition of $\mathrm{st_{mod}}$. However, all other simplices have been matched. Thus $\mathcal{G}\M(C_{3n}) \simeq \mathbb{S}^{2n-1} \vee \mathbb{S}^{2n-1}$.

    \item[Case 2]
        Let $t=3n+1$. Then there will be one critical $(2n)$-simplex while all other simplices have been matched. Thus $\mathcal{G}\M(C_{3n+1}) \simeq \mathbb{S}^{2n}$.

    \item[Case 3]
        Let $t=3n+2$. There is a single critical $(2n)$-simplex which was excluded from every $\sigma_j$ by the definition of $\mathrm{st_{mod}}$ with all other simplices matched. Thus $\mathcal{G}\M(C_{3n+2}) \simeq \mathbb{S}^{2n}$.
    \end{description}
\end{proof}




Now we investigate the generalized Morse complex of a cycle with a leaf attached. We use the notation $C_t\vee \ell$ to denote the cycle of length $t$ with a leaf $\ell$ joined to some vertex of $C_t$.

\begin{thm} \label{thm: general of cycle and leaf}
    Let $C_t$ be the path on $t$ vertices, $t > 3$.  Then
    \begin{gather*}
       \mathcal{G}\M(C_t \vee \ell) \simeq  {\begin{cases}
                                * &\text{if } t = 3n \\
                                \mathbb{S}^{2n} &\text{if } t = 3n+1\\
                                \mathbb{S}^{2n+1} &\text{if } t = 3n+2 \\
                                \end{cases}
                                }
    \end{gather*}
\end{thm}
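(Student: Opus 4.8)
The plan is to follow the template of Proposition~\ref{prop: tree suspension}: realize $\mathcal{G}\M(C_t\vee\ell)$ as a union of two collapsible star clusters and then extract the homotopy type from Lemma~\ref{lem: two contractible suspension}. Write the leaf as the edge $\{v_0,w\}$, where $w$ is the degree-one vertex and $v_0\in C_t$ has cycle-neighbors $v_1,v_{t-1}$; the two primitive gradient vector fields carried by this edge are $(w)v_0$ and $(v_0)w$. I would set $K_1=\SC((w)v_0)=\st((w)v_0)$ and $K_2=\SC((v_0)w)=\st((v_0)w)$. Because $\mathcal{G}\M(C_t\vee\ell)$ is flag, both are collapsible by Proposition~\ref{prop: star cluster collapsible}. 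They cover the whole complex: a discrete vector field that omits $(v_0)w$ is compatible with $(w)v_0$ and lies in $K_1$, while one that contains $(v_0)w$ can reuse neither the leaf edge nor $v_0$ as a source and hence is compatible with $(v_0)w$, landing in $K_2$. Lemma~\ref{lem: two contractible suspension} then yields $\mathcal{G}\M(C_t\vee\ell)\simeq\Sigma(K_1\cap K_2)$.

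Next I would identify the intersection. A discrete vector field lies in $K_1\cap K_2$ iff it contains none of $(w)v_0,(v_0)w,(v_0)v_1,(v_0)v_{t-1}$ — equivalently, it uses no primitive on the leaf edge and never pairs $v_0$ with an incident edge. Thus $I:=K_1\cap K_2$ is the full subcomplex of $\mathcal{G}\M(C_t)$ spanned by those gradient vector fields in which $v_0$ is not a source. My claim is that $I\cong\M(P_t)$. Both complexes are flag — $I$ as a full subcomplex of the flag complex $\mathcal{G}\M(C_t)$, and $\M(P_t)$ by Lemma~\ref{lem: flag} — so it suffices to produce an isomorphism between the graphs whose independence complexes they are (the complements of their compatibility graphs).

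The crux, and the step I expect to demand the most care, is this graph identification. The vertices of $I$ are the primitives $(v_1)v_0$ and $(v_{t-1})v_0$ on the two edges meeting $v_0$, together with both orientations $(v_i)v_{i+1},(v_{i+1})v_i$ of each interior edge $\{v_i,v_{i+1}\}$, $1\le i\le t-2$; there are $2t-2$ of them. Two primitives conflict precisely when they share a source vertex or share an edge, and tracing these conflicts shows the conflict graph is the single path
\[
(v_1)v_0 - (v_1)v_2 - (v_2)v_1 - (v_2)v_3 - \cdots - (v_{t-2})v_{t-1} - (v_{t-1})v_{t-2} - (v_{t-1})v_0
\]
on $2t-2$ vertices, so $I\cong\mathrm{Ind}(P_{2t-2})$. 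Performing the identical bookkeeping for $\M(P_t)$ — whose $2t-2$ primitives, two per edge, are linked by the same shared-edge and shared-vertex conflicts — shows its conflict graph is also a path on $2t-2$ vertices. Hence $\M(P_t)\cong\mathrm{Ind}(P_{2t-2})\cong I$.

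Finally I would combine the two reductions into $\mathcal{G}\M(C_t\vee\ell)\simeq\Sigma\,\M(P_t)$. Substituting the homotopy types from Proposition~\ref{prop: path homotopy} and using $\Sigma *\simeq *$ and $\Sigma\mathbb{S}^{k}\simeq\mathbb{S}^{k+1}$ gives $*$, $\mathbb{S}^{2n}$, and $\mathbb{S}^{2n+1}$ in the cases $t=3n,\,3n+1,\,3n+2$ respectively, matching the statement.
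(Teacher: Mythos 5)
Your argument is correct, and it takes a genuinely different route from the paper. The paper proves this theorem with the Cluster Lemma: it decomposes $\mathcal{G}\M(C_t\vee\ell)$ into a star cluster $\sigma_0$ of a large gradient vector field together with a sequence of stars $\sigma_1,\dots,\sigma_{n+1}$ indexed by position along the cycle, builds an explicit acyclic matching on each piece, and reads off the homotopy type from the surviving critical cell via Theorem \ref{thm: Forman}. You instead cover the complex by the two vertex stars $\st((w)v_0)$ and $\st((v_0)w)$ of the leaf-edge primitives, invoke Lemma \ref{lem: two contractible suspension} to get a suspension, and identify the intersection: your checks are right that a simplex lies in both stars exactly when it avoids $(w)v_0,(v_0)w,(v_0)v_1,(v_0)v_{t-1}$, that the resulting full subcomplex of the flag complex $\mathcal{G}\M(C_t\vee\ell)$ has conflict graph a path on $2t-2$ vertices, and that $\M(P_t)$ has the same conflict graph, so both are $\mathrm{Ind}(P_{2t-2})$ and the answer is $\Sigma\,\M(P_t)$, which matches the stated cases via Proposition \ref{prop: path homotopy}. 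What your approach buys is brevity and structure: it avoids the index-heavy matchings of the paper's proof and exposes the conceptual reason for the answer, namely that deleting the leaf-edge choices reduces the cycle-with-leaf to a path; the cost is that it leans on Proposition \ref{prop: path homotopy} (already available in the paper) and yields no explicit critical cell, whereas the paper's Cluster Lemma computation stays within the uniform framework used for all the other results and produces the critical simplex directly.
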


\begin{proof}
    Let $\{v_1,v_0v_1\}$ be the leaf attached to $v_1\in C_t$.

  To apply the Cluster lemma, we first construct collections as follows:
    \begin{enumerate}
        \item Let $\sigma_0 := \mathrm{SC}(\{(v_0)v_1(v_1)v_2(v_2)v_3 \ldots (v_{n})v_{1}\})$
        \item For $1 \leq j \leq n$,
        \begin{enumerate}
            \item Let $j = 2k-1$ and define
                \begin{align*}
                    \sigma_j := \mathrm{st}(\{(v_{1+3(k-1)})v_{0+3(k-1)}(v_{3+3(k-1)})v_{2+3(k-1)}\})
                \end{align*}

            \item Let $j = 2k$ and define
                \begin{align*}
                    \sigma_j := \mathrm{st}(\{(v_{t-3(k-1)})v_{(t-1)-3(k-1)}(v_{(t-2)-3(k-1)})v_{(t-3)-3(k-1)}\})
                \end{align*}
        \end{enumerate}
        \item For $j = n+1$,
        \begin{enumerate}
            \item if $n+1 = 2k-1$, then
                \begin{align*}
                    \sigma_{n+1} := \mathrm{st}(\{(v_{t-3(k-1)})v_{(t-1)-3(k-1)}(v_{(t-1)-3(k-1)})v_{(t-2)-3(k-1)}\})
                \end{align*}
            \item if $n+1 = 2k$, then
                \begin{align*}
                    \sigma_{n+1} := \mathrm{st}(\{(v_{1+3(k-1)})v_{0+3(k-1)}(v_{2+3(k-1}))v_{1+3(k-1)}\})
                \end{align*}
        \end{enumerate}
    \end{enumerate}
    Let $\Delta_0 := \sigma_0$ and $\Delta_k := \sigma_k - \bigcup_{j=0}^{k-1}{\sigma_j}$. Then $\bigcup_{k=0}^{n+1}{\Delta_k} = \mathcal{G}\M(C_t \vee l)$. Clearly $\Delta_0$ is collapsible. Now match each $\Delta_j$ for $1 < j < n$ by the following:

    If $j=2k-1$, match each $m$-simplex of the form
    \begin{align*}
          \{(v_{1+3(k-1)})v_{0+3(k-1)}(v_{3+3(k-1)})v_{2+3(k-1)} \ldots\}
    \end{align*}
    to the corresponding $m+1$-simplex of the form
    \begin{align*}
        \{(v_{1+3(k-1)})v_{0+3(k-1)}(v_{2+3(k-1)})v_{1+3(k-1)}(v_{3+3(k-1)})v_{2+3(k-1)} \ldots\}
    \end{align*}

    If $j=2k$,  match each $m$-simplex of the form
    \begin{align*}
        \{(v_{t-3(k-1)})v_{(t-1)-3(k-1)}(v_{(t-2)-3(k-1)})v_{(t-3)-3(k-1)} \ldots \}
    \end{align*}
    to the corresponding $m+1$-simplex of the form
    \begin{align*}
        \{(v_{t-3(k-1)})v_{(t-1)-3(k-1)}(v_{(t-1)-3(k-1)})v_{(t-2)-3(k-1)}(v_{(t-2)-3(k-1)})v_{(t-3)-3(k-1)} \ldots\}
    \end{align*}
    Thus all simplicies in $\Delta_j$ for $1 < j < n$ have been matched.

    Now we must match simplices in $\Delta_{n+1}$. We consider three cases:
    \begin{description}
    \item[Case 1]
        Let $t=3n$. Then $\Delta_{n+1} = \emptyset$, and thus $\mathcal{G}\M(C_t \vee l) \simeq *$.

    \item[Case 2]
        Let $t=3n+1$. Then $\Delta_{n+1}$ only contains one $2n$-simplex. Thus $\mathcal{G}\M(C_t \vee l) \simeq \mathbb{S}^{2n}$.

    \item[Case 3]
        Let $t=3n+2$. Then $\Delta_{n+1}$ only contains one $2n+1$-simplex. Thus $\mathcal{G}\M(C_t \vee l) \simeq \mathbb{S}^{2n+1}$.
    \end{description}

\end{proof}

The homotopy type of the Morse complex of $C_t\vee \ell$ was computed in \cite[Proposition 5.6]{donovan_lin_scoville_2022}.  It turns out to be the same as the homotopy type of the Generalized Morse complex of $C_t\vee \ell$.  We thus have

\begin{cor}
    Let $C_t \vee \ell$ be a cycle with a leaf. Then,
    \begin{gather*}
        \mathcal{G}\M(C_t \vee \ell) \simeq \M(C_t \vee \ell).
    \end{gather*}

\end{cor}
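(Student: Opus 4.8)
The plan is to prove the corollary by comparing the two explicit homotopy-type computations directly. We already have $\mathcal{G}\M(C_t\vee\ell)$ computed in Theorem \ref{thm: general of cycle and leaf}, giving a point when $t=3n$, the sphere $\mathbb{S}^{2n}$ when $t=3n+1$, and $\mathbb{S}^{2n+1}$ when $t=3n+2$. By hypothesis, the homotopy type of $\M(C_t\vee\ell)$ was determined in \cite[Proposition 5.6]{donovan_lin_scoville_2022}. The entire content of the corollary is the observation that these two case-by-case answers agree. So the first step I would take is simply to quote the statement of \cite[Proposition 5.6]{donovan_lin_scoville_2022} verbatim and place its three cases side by side with the three cases of Theorem \ref{thm: general of cycle and leaf}.

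The heart of the argument is then a trivial case check. For each residue of $t$ modulo $3$, I would verify that the cited value of $\M(C_t\vee\ell)$ matches the value of $\mathcal{G}\M(C_t\vee\ell)$ from Theorem \ref{thm: general of cycle and leaf}: contractible versus contractible when $t=3n$, $\mathbb{S}^{2n}$ versus $\mathbb{S}^{2n}$ when $t=3n+1$, and $\mathbb{S}^{2n+1}$ versus $\mathbb{S}^{2n+1}$ when $t=3n+2$. Since in each case the two expressions are literally identical, the homotopy equivalence $\mathcal{G}\M(C_t\vee\ell)\simeq\M(C_t\vee\ell)$ follows immediately for all $t>3$.

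The only genuine obstacle I anticipate is bookkeeping rather than mathematics: I must confirm that the indexing conventions in \cite[Proposition 5.6]{donovan_lin_scoville_2022} (in particular, how $n$ is defined relative to $t$, and whether the leaf is counted among the vertices) match the conventions used in Theorem \ref{thm: general of cycle and leaf}. If the two sources parametrize the cycle length slightly differently, a small reindexing may be needed before the cases line up; this would be the one place to proceed with care. Assuming the conventions agree, as the phrasing in the excerpt strongly suggests, no further argument is required and the corollary is an immediate consequence of the two prior computations.
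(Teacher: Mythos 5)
Your proposal is correct and matches the paper's own justification: the corollary is deduced precisely by observing that the case-by-case answer of Theorem \ref{thm: general of cycle and leaf} coincides with the homotopy type of $\M(C_t\vee\ell)$ computed in \cite[Proposition 5.6]{donovan_lin_scoville_2022}. The paper additionally sketches, after the corollary, a direct collapse of $\mathcal{G}\M(C_t\vee\ell)$ onto $\M(C_t\vee\ell)$ by matching the four closed $V$-paths in pairs, but that is supplementary to the comparison argument you give.
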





A collapse of $\mathcal{G}\M(C_t \vee \ell)$ onto $ \M(C_t \vee \ell)$ can be seen by considering the closed V-paths in  $\mathcal{G}\M(C_t \vee \ell)$ that are added to $\M(C_t \vee \ell)$. We see that there are four such V-paths: a clockwise cycle, a counterclockwise cycle, a clockwise cycle with an inward facing arrow on the leaf, and a counterclockwise cycle with an inward facing arrow on the leaf. By matching the clockwise cycle with the clockwise cycle with an inward facing arrow on the leaf and also matching the counterclockwise cycle to the counterclockwise cycle with an inward facing arrow on the leaf, we have collapsed $\mathcal{G}\M(C_t \vee \ell)$ back into $\M(C_t \vee \ell)$, showing a homotopy equivalence.

\section{Homotopy Type of the Matching Complex}

A well-known complex that one associated to a graph is the matching complex.

\begin{defn}
    Let the \textbf{matching complex} of a graph, $G$, denoted $\mathrm{M}(G)$, is a simplicial complex with vertices given by edges of $G$ and faces given by matchings of $G$, where a matching is a subset of edges $H \subseteq E(G)$ such that any vertex $v \in V(H)$ has degree at most $1$.
\end{defn}









The homotopy types of the matching complexes of the path and cycle were computed in \cite{Kozlov99}. As in the case of the Morse complex for a path, we provide an alternate proof of these computations using discrete Morse theory, the cluster lemma, and star clusters. Then we provide a new result, computing the homotopy type of the matching complex for Dutch windmill graphs. We first make the following simple but useful observation. As observed in \cite{DCNY-2022}, $\mathcal{G}\M(G)\cong \mathrm{M}(\mathrm{sd}(G))$ for $G$ any graph.  Thus the results in section \ref{sec: Homotopy Type of the Generalized Morse Complex} hold for the matching complex on the barycentric subdivision of the graph in question.  It was furthermore proved in \cite[Proposition 3.5]{donovan_lin_scoville_2022} that if a graph $G$ has two leaves sharing a common vertex, then the Morse complex is contractible.  The same result holds for the generalized Morse complex.  We thus have the following:

\begin{cor} If a graph $G$ has two leaves sharing a common vertex, then $\mathrm{M}(\mathrm{sd}(G))$ is contractible.
\end{cor}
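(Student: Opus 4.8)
The plan is to deduce the corollary from the two facts assembled immediately before its statement: the isomorphism $\mathcal{G}\M(G)\cong \mathrm{M}(\mathrm{sd}(G))$ of \cite{DCNY-2022}, and the contractibility of the generalized Morse complex of a graph with two leaves at a common vertex. Since the isomorphism is purely combinatorial and may be taken as given, the entire content of the argument is to verify that $\mathcal{G}\M(G)$ is contractible; the matching complex $\mathrm{M}(\mathrm{sd}(G))$ then inherits contractibility through the isomorphism. Thus I would first reduce the statement to showing $\mathcal{G}\M(G)\simeq *$.

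To establish contractibility of $\mathcal{G}\M(G)$ I would argue by strong collapse, exploiting that $\mathcal{G}\M(G)$ is flag, so that compatibility of primitive gradient vector fields is decided edge-by-edge. Write $u_1,u_2$ for the two leaves and $c$ for their common neighbor, so that the only primitive gradient vector fields meeting the edges $u_1c$ and $u_2c$ are $(u_1)c,\,(c)u_1,\,(u_2)c,\,(c)u_2$. The first step is to show that the vertex $(c)u_1$ is dominated by $(u_2)c$: in any maximal discrete vector field containing $(c)u_1$ the vertex $c$ is already used, so $(c)u_2$ is forbidden, and since $u_2$ is a leaf its only possible pairing $(u_2)c$ must then appear by maximality. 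Removing the dominated vertex $(c)u_1$ is a strong collapse, and in the resulting full subcomplex the vertex $(u_1)c$ becomes a cone apex, because its only incompatibility in $\mathcal{G}\M(G)$ was with $(c)u_1$ (the edge $u_1c$ lies in no other pair and $u_1$ lies in no other simplex). A cone is contractible, so $\mathcal{G}\M(G)$ strong collapses to a contractible complex and is itself contractible.

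The step I expect to require the most care is checking that this domination genuinely holds in the \emph{generalized} Morse complex rather than only in $\M(G)$. Because $\mathcal{G}\M(G)$ contains the extra simplices coming from closed $V$-paths, one must confirm that these do not produce a maximal discrete vector field that contains $(c)u_1$ yet avoids $(u_2)c$. This is exactly where the leaf hypothesis does the work: a degree-one vertex cannot lie on any closed $V$-path, so the pairing $(u_2)c$ can always be adjoined without creating a cycle, and the maximality argument above is insensitive to the presence of the additional flag simplices. Once this is confirmed, the strong collapse is valid, $\mathcal{G}\M(G)\simeq *$, and applying $\mathcal{G}\M(G)\cong \mathrm{M}(\mathrm{sd}(G))$ yields that $\mathrm{M}(\mathrm{sd}(G))$ is contractible. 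Alternatively, one could simply cite the generalized analogue of \cite[Proposition 3.5]{donovan_lin_scoville_2022} recorded in the preceding paragraph together with the isomorphism, but the strong-collapse argument makes the mechanism transparent.
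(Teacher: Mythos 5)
Your proposal is correct and follows the same route as the paper: the corollary is obtained by combining the isomorphism $\mathcal{G}\M(G)\cong \mathrm{M}(\mathrm{sd}(G))$ from \cite{DCNY-2022} with the contractibility of the generalized Morse complex of a graph having two leaves at a common vertex. The paper merely asserts that latter fact by analogy with \cite[Proposition 3.5]{donovan_lin_scoville_2022}, so your strong-collapse argument (domination of $(c)u_1$ by $(u_2)c$, then coning off at $(u_1)c$ using the leaf condition and flagness) is a correct filling-in of the one step the paper leaves unproven.
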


\begin{prop}  \label{prop: matching path}
    Let $P_t$ be a path on $t \geq 3$ vertices. Then
    \begin{align*}
        \mathrm{M}(P_t) \simeq  {\begin{cases}
                                \mathbb{S}^{n-1}  &\text{if } t = 3n \\
                                \mathbb{S}^{n-1} &\text{if } t = 3n+1\\
                                * &\text{if } t = 3n+2 \\
                                \end{cases}
                                }
    \end{align*}
\end{prop}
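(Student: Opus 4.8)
The plan is to reduce the computation to a clean suspension recursion and then induct, in the spirit of Proposition \ref{prop: tree suspension}. First I would record the combinatorial structure of $\mathrm{M}(P_t)$: its vertices are the edges $e_i = \{v_{i-1},v_i\}$ for $1 \le i \le t-1$, two such edges are compatible exactly when they are non-consecutive (they share no vertex iff $|i-j|\ge 2$), and a set of edges spans a simplex precisely when its members are pairwise compatible. In particular $\mathrm{M}(P_t)$ is flag, so Proposition \ref{prop: star cluster collapsible} applies to any star cluster in it.

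The heart of the argument I propose is the identity $\mathrm{M}(P_t) \simeq \Sigma\, \mathrm{M}(P_{t-3})$ for $t \ge 6$, obtained from Lemma \ref{lem: two contractible suspension}. I would write $\mathrm{M}(P_t) = K_1 \cup K_2$, where $K_2 = \mathrm{st}(e_2)$ is the closed star of the vertex $e_2$ and $K_1$ is the subcomplex of all matchings avoiding $e_2$. Since $e_2$ conflicts only with $e_1$ and $e_3$, every matching either misses $e_2$ (and lies in $K_1$) or contains $e_2$, hence misses $e_1$ and $e_3$ (and lies in $K_2$); thus $K_1\cup K_2 = \mathrm{M}(P_t)$. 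Now $K_2 = \mathrm{SC}(e_2)$ is a star cluster, hence collapsible by Proposition \ref{prop: star cluster collapsible}; and in $K_1$ the vertex $e_1$, whose only conflict $e_2$ has been removed, is compatible with every remaining edge, so $K_1$ is the cone $e_1 * \mathrm{M}(\{e_3,\dots,e_{t-1}\})$ and is likewise collapsible. Finally $K_1 \cap K_2$ consists of the matchings avoiding $e_1,e_2,e_3$, i.e.\ the matchings on the edges $e_4,\dots,e_{t-1}$; these edges form a path-type conflict graph, so $K_1\cap K_2 \cong \mathrm{M}(P_{t-3})$. Lemma \ref{lem: two contractible suspension} then yields $\mathrm{M}(P_t)\simeq \Sigma\,\mathrm{M}(P_{t-3})$.

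With the recursion in hand I would finish by induction in steps of three, using the base cases $\mathrm{M}(P_3)\simeq \mathbb{S}^0$ (two conflicting edges, hence two points), $\mathrm{M}(P_4)\simeq \mathbb{S}^0$ (three edges whose complex is an isolated vertex together with a single edge), and $\mathrm{M}(P_5)\simeq *$ (four edges whose complex is a single path, hence contractible). Iterating $\mathrm{M}(P_t)\simeq \Sigma\,\mathrm{M}(P_{t-3})$ down to the appropriate base case gives $\mathrm{M}(P_{3n})\simeq \Sigma^{n-1}\mathbb{S}^0 = \mathbb{S}^{n-1}$, $\mathrm{M}(P_{3n+1})\simeq \Sigma^{n-1}\mathbb{S}^0 = \mathbb{S}^{n-1}$, and $\mathrm{M}(P_{3n+2})\simeq \Sigma^{n-1}(*) = *$, matching the three cases of the statement.

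The main obstacle I anticipate is purely the verification of the two collapsibility claims and the intersection identification: one must check that $K_1$ really is a cone on $e_1$ (equivalently the star cluster $\mathrm{SC}_{K_1}(e_1)$) after deleting $e_2$, and that no boundary effects at the ends of the path spoil the clean identification $K_1\cap K_2\cong \mathrm{M}(P_{t-3})$ for small $t$; this is why I separate out $t \le 5$ as base cases. As an alternative consistent with the paper's other proofs, one could instead build a single acyclic matching directly via the Cluster Lemma (Lemma \ref{lem: cluster lemma}), peeling the edges $e_1,e_2,\dots$ off in blocks of three and collapsing each block, and then count the surviving critical simplices before invoking Theorem \ref{thm: Forman}; there the delicate point is the period-three bookkeeping that pins down the single critical simplex and its dimension in each residue class.
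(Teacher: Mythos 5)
Your argument is correct, but it follows a genuinely different route from the paper. The paper decomposes $\mathrm{M}(P_t)$ via the Cluster Lemma into a star cluster $\Delta_0$ plus one residual piece $\Delta_1$, builds an explicit acyclic matching, and reads off the homotopy type from the surviving critical simplex using Theorem \ref{thm: Forman}; in particular it identifies the single critical $(n-1)$-simplex $\bigcup_{i=0}^{n-1}(v_{3i+1}v_{3i+2})$ directly, with no induction. You instead prove the suspension recursion $\mathrm{M}(P_t)\simeq \Sigma\,\mathrm{M}(P_{t-3})$ by writing $\mathrm{M}(P_t)=K_1\cup K_2$ with $K_2=\mathrm{st}(e_2)$ a cone on $e_2$ and $K_1$ a cone on $e_1$ (both collapsible), identifying $K_1\cap K_2$ with $\mathrm{M}(P_{t-3})$, and invoking Lemma \ref{lem: two contractible suspension}; the three base cases $t=3,4,5$ are checked by hand and the claimed answers follow by iterating the suspension. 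All the steps check out: $\mathrm{M}(P_t)$ is the independence complex of a path on the $t-1$ edges, so it is flag, $K_1$ really is the cone $e_1 * \mathrm{M}(\{e_3,\dots,e_{t-1}\})$ since $e_1$ conflicts only with $e_2$, and the intersection consists exactly of the matchings avoiding $e_1,e_2,e_3$. What each approach buys: yours is more structural, exhibiting the answer as an iterated suspension of a finite set (essentially Kozlov's and Barmak's viewpoint on independence complexes of paths) and requiring no bookkeeping of matchings; the paper's approach produces an explicit gradient vector field whose single critical cell is named, and it is the template that the paper reuses for cycles, centipedes, and Dutch windmill graphs, where a clean three-step recursion is not available.
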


\begin{proof}
    We apply the Cluster Lemma. In order to do so, we decompose $\mathrm{M}(P_t)$ into collections $\Delta_k$. First, we construct collections of sub-simplices $\sigma_i$. We construct collections as follows:
    \begin{enumerate}
        \item Let $\sigma_0 := \mathrm{SC}(\{ \bigcup^{k}_{i=0}(v_{3i}v_{3i+1})\})$, $k \leq n$
        \item Let $\sigma_1 := \mathrm{st}{\{(v_1v_2)\}}$
    \end{enumerate}

    Let $\Delta_0 := \sigma_0$ and $\Delta_1 := \sigma_1 - \sigma_0$. Now any maximal matching of $P_t$ contains either $v_0v_1$ or $v_1v_2$.  If it contains $v_0$, then it is in $\Delta_0$.  If it contains $v_1v_2$, then it is in $\Delta_1$. Hence $\Delta_0\cup \Delta_1 = \mathrm{M}(P_t)$  so that we define an acyclic matching on  $\Delta_0, \Delta_1$ and apply the Cluster Lemma.

    Now $\Delta_0$ is flag so it is collapsible by Proposition \ref{prop: star cluster collapsible} and Lemma \ref{lem: flag}. To construct a matching on $\Delta_1$, we consider three cases:
    \begin{description}
    \item[Case 1]
        Let $t=3n$. Then $\Delta_1$ is a single simplex given by $\{ \bigcup^{n-1}_{i=0}(v_{3i+1}v_{3i+2})\}$. Hence this corresponds to an $(n-1)$-simplex in the Morse complex and thus is critical so that $\mathrm{M}(P_{3n}) \simeq \mathbb{S}^{n-1}$.

    \item[Case 2]
            Let $t=3n+1$. As in Case 1, $\Delta_1$ is a single matching given by $\{ \bigcup^{n-1}_{i=0}(v_{3i+1}v_{3i+2})\}$. This matching corresponds to a critical $(n-1)$-simplex in the Morse complex and thus $\mathrm{M}(P_{3n+1}) \simeq \mathbb{S}^{n-1}$.

    \item[Case 3]
        Let $t=3n+2$. Then $\Delta_1 = \emptyset$. Thus $\mathrm{M}(P_{3n+2}) \simeq *$.
    \end{description}
\end{proof}

We also provide an alternate proof for computing the homotopy type of the matching complex of the cycle using the same technique and a similar matching.

\begin{prop} \label{prop: matching cycle}
     Let $C_t$ be a cycle on $t \geq 3$ vertices. Then
    \begin{align*}
        \mathrm{M}(C_t) \simeq  {\begin{cases}
                                \mathbb{S}^{n-1} \vee \mathbb{S}^{n-1}  &\text{if } t = 3n \\
                                \mathbb{S}^{n-1} &\text{if } t = 3n+1\\
                                \mathbb{S}^n &\text{if } t = 3n+2 \\
                                \end{cases}
                                }
    \end{align*}
\end{prop}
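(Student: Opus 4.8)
The plan is to mirror the proof of Proposition \ref{prop: matching path} exactly, adapting the star cluster decomposition to account for the single extra edge that closes the cycle. I would label the vertices of $C_t$ as $v_0, v_1, \ldots, v_{t-1}$ with edges $v_iv_{i+1}$ (indices mod $t$), and observe that every maximal matching must make a decision about how the edge $v_0v_1$ is covered. First I would set $\sigma_0 := \mathrm{SC}(\{\bigcup_{i=0}^{k}(v_{3i}v_{3i+1})\})$ as before, which is collapsible by Proposition \ref{prop: star cluster collapsible} since $\mathrm{M}(C_t)$ is flag. The essential difference from the path case is that the cycle wraps around, so I would need a second star cluster $\sigma_1$ built around the edge $v_1v_2$ and possibly a third piece capturing matchings that avoid the star cluster entirely. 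I would then define $\Delta_0 := \sigma_0$, $\Delta_1 := \sigma_1 - \sigma_0$, and a remainder $\Delta_2 := \mathrm{M}(C_t) - (\sigma_0 \cup \sigma_1)$, verify that these cover $\mathrm{M}(C_t)$ and satisfy the subcomplex condition, and then apply the Cluster Lemma (Lemma \ref{lem: cluster lemma}).

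The key computation is the count of critical simplices in $\Delta_1$ and $\Delta_2$ in each of the three residue cases $t = 3n, 3n+1, 3n+2$. For each case I would construct an explicit acyclic matching on the remaining pieces analogous to the path proof — matching a simplex not containing a designated edge with the simplex obtained by adjoining that edge — and then identify exactly which maximal matchings survive unmatched. The target homotopy types tell me what to expect: in the case $t = 3n$ I must produce two critical $(n-1)$-simplices (yielding $\mathbb{S}^{n-1} \vee \mathbb{S}^{n-1}$), in the case $t = 3n+1$ a single critical $(n-1)$-simplex, and in the case $t = 3n+2$ a single critical $n$-simplex. The appearance of two critical simplices precisely when $3 \mid t$ reflects the two rotationally distinct perfect-ish matchings of $C_{3n}$ that use every third edge, and this is the structural feature I would key the matching construction around.

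I expect the main obstacle to be the careful bookkeeping in the wrap-around region, that is, ensuring the acyclic matchings on $\Delta_1$ and $\Delta_2$ do not conflict at the edges near $v_0$ and $v_{t-1}$ where the cyclic identification takes effect. In the path case the two endpoints are ``free,'' but in the cycle the edge $v_{t-1}v_0$ can interact with both $v_0v_1$ and $v_{t-2}v_{t-1}$, so I must confirm that each simplex lands in exactly one $\Delta_k$ and that the matching arrows stay within a single cone (guaranteeing acyclicity, as in the path argument where each partial matching is a subset of a cone matching). Once the decomposition and matchings are verified to be acyclic via the Cluster Lemma, the conclusion follows immediately from Theorem \ref{thm: Forman}, reading off the homotopy type from the dimensions and multiplicities of the critical simplices. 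The delicate point is confirming the critical-simplex count, particularly justifying the \emph{two} critical cells when $t = 3n$, which I would verify by direct inspection of the two surviving full matchings of the cycle.
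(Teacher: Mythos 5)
Your plan is essentially the paper's proof: the same star cluster $\sigma_0$ over the edges $v_{3i}v_{3i+1}$, followed by stars of the two edges adjacent to $v_0v_1$ (the paper takes $\sigma_1 = \mathrm{st}(v_{t-1}v_0)$ and $\sigma_2 = \mathrm{st}(v_1v_2)$ rather than a generic remainder piece, but this is the same decomposition), the Cluster Lemma, and the identical critical-cell counts in each residue class, with Theorem \ref{thm: Forman} finishing. The one detail worth tightening is that $C_{3n}$ has three rotational classes of ``every third edge'' matchings, of which one is the top simplex of the collapsible $\sigma_0$ and the other two survive as the critical $(n-1)$-cells.
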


\begin{proof}
    As usual, we apply the Cluster Lemma by first constructing collections of subsimplices $\sigma_i$.
    \begin{enumerate}
        \item Let $\sigma_0 := \mathrm{SC}(\{ \bigcup^{k}_{i=0}(v_{3i}v_{3i+1})\})$, $k \leq n$ ($k\leq n-1$ when $t=3n+1$)
        \item Let $\sigma_1 := \mathrm{st}{\{(v_{(t-1)}v_0)\}}$
        \item Let $\sigma_2 := \mathrm{st}{\{(v_1v_2)\}}$
    \end{enumerate}

    Define $\Delta_0 := \sigma_0, \Delta_1 := \sigma_1 - \sigma_0$, and $\Delta_2:= \sigma_2-(\sigma_0\cup \sigma_1)$. Since every matching of $C_t$ is in one of the $\sigma_i$, it follows that $\Delta_0\cup \Delta_1\cup \Delta_2 = \mathrm{M}(C_t)$.  To define an acyclic matching on the $\Delta_i$, we first observe that $\Delta_0$ is collapsible.

    The matchings on both $\Delta_1$ and $\Delta_2$ are considered in three cases:
    \begin{description}
    \item[Case 1]
    Let $t=3n$. In $\Delta_1$, there exists one $(n-1)$-simplex, $\{\bigcup^{k}_{i=0}(v_{2+3i}v_{3+3i})\}$. Thus it cannot be matched so it it critical. In $\Delta_2$, there exists one $(n-1)$-simplex of the form $\{\bigcup^{k}_{i=0}(v_{1+3i}v_{2+3i})\}$ which also cannot be matched. Thus $\mathrm{M}(C_{3n}) \simeq \mathbb{S}^{n-1} \vee \mathbb{S}^{n-1}$.

    \item[Case 2]
        Let $t=3n+1$. Any $(n-1)$-simplex $V$ in $\Delta_1$ does not contain $\{(v_{t-3}v_{t-2})\}$ so we match $V$ with $V\cup \{(v_{t-3}v_{t-2})\}$. This yields a perfect acyclic matching on $\Delta_1$. Now there is only one simplex in $\Delta_2$; namely, the $(n-1)$-simplex $\{ \bigcup^{n-1}_{i=0}(v_{3i+1}v_{3i+2})\}$. This $(n-1)$-simplex is critical, hence $\mathrm{M}(C_{3n+1}) \simeq \mathbb{S}^{n-1}$.



    \item[Case 3]
    Let $t=3n+2$. For each $(n-1)$-simplex $V$ of $\Delta_1$, there is exactly one $k$, $0\leq k \leq n-1$, such that both $v_{3k+1}v_{3k+2}$ and $v_{3k+2}v_{3k+3}$ are not in $V$. Match this $V$ with $V\cup \{v_{3k+2}v_{3k+3}\}$.  Then there is one $n$-simplex left unmatched, namely, $\{ \bigcup^{n}_{i=0}(v_{3i+1}v_{3i+2})\}$. Observe that $\Delta_2$ is empty, and thus $\mathrm{M}(C_{3n+2}) \simeq \mathbb{S}^{n}$.

    \end{description}
\end{proof}

\begin{defn}
    A \textbf{centipede graph}, $\mathscr{C}_t$ is a graph obtained by adding a leaf to each vertex on a path $P_t$. If $v_0, \ldots, v_{t-1}$ are the vertices of $P_t$, denote the vertex of the leaf added to $v_i$ by $v'_i$.
\end{defn}

\begin{prop}
    Let $\mathscr{C}_t$ be a centipede graph. Then
    \begin{gather*}
       \mathrm{M}(\mathscr{C}_t) \simeq  {\begin{cases}
                                \mathbb{S}^{n-1} &\text{if } t = 2n \\
                                * &\text{if } t = 2n+1\\
                                \end{cases}
                                }
    \end{gather*}
\end{prop}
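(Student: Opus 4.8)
The plan is to compute a single acyclic matching on $\mathrm{M}(\mathscr{C}_t)$ via the star cluster of one carefully chosen maximal matching, so that the answer falls out of Forman's theorem (Theorem \ref{thm: Forman}) without ever splitting into residue classes of $t$ modulo $3$. Write $\ell_i := v_iv'_i$ for the $t$ leg edges and $s_i := v_iv_{i+1}$ for the $t-1$ spine edges, and let $\mathcal{L} := \{\ell_0, \ell_1, \ldots, \ell_{t-1}\}$ be the matching consisting of all legs. Since each $v'_i$ has degree $1$, $\mathcal{L}$ is a (perfect) matching of $\mathscr{C}_t$, hence a simplex of $\mathrm{M}(\mathscr{C}_t)$. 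Because the matching complex is flag, Proposition \ref{prop: star cluster collapsible} shows that $\mathrm{SC}(\mathcal{L}) = \bigcup_i \mathrm{st}(\ell_i)$ is collapsible. The whole proof then reduces to understanding which matchings lie \emph{outside} this star cluster.

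The key combinatorial step is the claim that a matching $M$ fails to lie in $\mathrm{SC}(\mathcal{L})$ if and only if every spine vertex $v_i$ is covered by a spine edge of $M$. Indeed, $M \in \mathrm{st}(\ell_i)$ precisely when $M \cup \{\ell_i\}$ is again a matching; since $v'_i$ is a leaf, this happens exactly when $v_i$ is not already covered by a spine edge (it is then either free or covered by $\ell_i$ itself). Taking the union over $i$, we get $M \in \mathrm{SC}(\mathcal{L})$ iff some $v_i$ is not spine-covered, so $M \notin \mathrm{SC}(\mathcal{L})$ iff all $v_i$ are spine-covered. A matching covering every spine vertex using only spine edges is exactly a perfect matching of the path $v_0v_1\cdots v_{t-1}$; such a matching exists iff $t$ is even, in which case it is the unique matching $P := \{s_0, s_2, \ldots, s_{t-2}\}$. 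Moreover every proper face of $P$ leaves some $v_i$ uncovered and hence lies in $\mathrm{SC}(\mathcal{L})$, so when $t$ is even the complement of $\mathrm{SC}(\mathcal{L})$ consists of the single simplex $P$.

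This immediately yields both cases. If $t = 2n+1$ is odd, then $\mathrm{SC}(\mathcal{L}) = \mathrm{M}(\mathscr{C}_t)$, which is collapsible, so $\mathrm{M}(\mathscr{C}_t) \simeq *$. If $t = 2n$ is even, I would apply the Cluster Lemma (Lemma \ref{lem: cluster lemma}) to the poset $\{0 < 1\}$ with $\Delta_0 := \mathrm{SC}(\mathcal{L})$ and $\Delta_1 := \{P\}$; both are collections of simplices with $\Delta_0$ and $\Delta_0 \cup \Delta_1 = \mathrm{M}(\mathscr{C}_t)$ subcomplexes, so the hypotheses hold. On $\Delta_0$ take the collapsing matching (one critical $0$-simplex) and on $\Delta_1$ take the empty matching, leaving the single $(n-1)$-simplex $P$ critical since $|P| = n$. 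The resulting acyclic matching on $\mathrm{M}(\mathscr{C}_t)$ has exactly one critical $0$-cell and one critical $(n-1)$-cell, so Theorem \ref{thm: Forman} gives $\mathrm{M}(\mathscr{C}_t) \simeq \mathbb{S}^{n-1}$ (for $t = 2$ the two critical cells are both $0$-cells, giving $\mathbb{S}^0 = \mathbb{S}^{n-1}$).

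The only real content is the combinatorial claim in the second paragraph; once the complement of $\mathrm{SC}(\mathcal{L})$ is pinned down to be either empty or the single simplex $P$, everything else is a direct application of the star-cluster collapse, the Cluster Lemma, and Forman's theorem. I expect the main obstacle to be getting the characterization of the complement exactly right, and in particular remembering that the star cluster is a union of \emph{closed} stars, so that all proper faces of $P$ (each of which leaves a spine vertex free) are already absorbed into $\mathrm{SC}(\mathcal{L})$.
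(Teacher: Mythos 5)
Your proof is correct and follows essentially the same route as the paper: both take $\Delta_0$ to be the star cluster of the all-legs matching and identify the complement as either empty ($t$ odd) or the single perfect spine matching $\{s_0,s_2,\ldots,s_{t-2}\}$ ($t$ even), then apply the Cluster Lemma and Forman's theorem. The only difference is cosmetic: the paper reaches that single critical simplex as $\mathrm{st}(v_0v_1)\setminus\sigma_0$ and asserts the union covers everything, whereas you characterize the complement of the star cluster directly, which supplies the justification the paper leaves implicit.
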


\begin{proof}
    Let $\mathscr{C}_t$ be a centipede graph. We apply the Cluster Lemma and construct collections as follows:
    \begin{enumerate}
        \item Let $\sigma_0 := \mathrm{SC}(\{\bigcup_{i=0}^{t-1}(v_iv_i')\})$
        \item Let $\sigma_1 := \mathrm{st}((v_0v_1))$
    \end{enumerate}
     Define $\Delta_0 := \sigma_0$ and $\Delta_1 := \sigma_1 - \sigma_0$ so that $\Delta_0\cup
     \Delta_1= \mathrm{M}(\mathscr{C}_t)$. Define an acyclic matching on each $\Delta_i$ as follows:

    We know $\Delta_0$ is collapsible by Proposition \ref{prop: star cluster collapsible} and Lemma \ref{lem: flag}.

   For $\Delta_1$, we have two cases:
    \begin{description}
    \item[Case 1] Let $t=2n$. Then the only element in $\Delta_1$ is $\{\bigcup_{j=0}^{n-1}(v_{2j}v_{2j+1})\}$, an $(n-1)$-simplex. Hence $\mathrm{M}(\mathscr{C}_t) \simeq \mathbb{S}^{n-1}$.

    \item[Case 2] Let $t=2n+1$. Then $\Delta_1 = \emptyset$. Thus $\mathrm{M}(\mathscr{C}_t) \simeq *$.

    \end{description}

\end{proof}

\begin{defn}
    Let $D^n_m$ be a \textbf{Dutch windmill graph}. $D^n_m$ is obtained by taking $n$ copies of the cycle $C_m$ and joining them at a common vertex.
\end{defn}

\begin{thm} \label{thm: matching windmill graph}
    Let $D^n_m$ be a Dutch windmill graph. Then
    \begin{gather*}
       \mathrm{M}(D^n_m) \simeq  {\begin{cases}
                                * &\text{if } m = 3k \\
                                \mathbb{S}^{nk-1} &\text{if } m = 3k+1 \\
                                \vee^{2n-1}\mathbb{S}^{nk} &\text{if } m = 3k+2\\
                                \end{cases}
                                }
    \end{gather*}
\end{thm}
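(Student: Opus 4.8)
The plan is to run the Cluster Lemma (Lemma~\ref{lem: cluster lemma}) in the same spirit as the proofs of Propositions~\ref{prop: matching path} and~\ref{prop: matching cycle}, organised around the common center $v_0$ of the $n$ cycles. Label the non-central vertices of the $j$-th arm $u^j_1,\dots,u^j_{m-1}$ so that the $j$-th cycle reads $v_0u^j_1u^j_2\cdots u^j_{m-1}v_0$; the two edges of arm $j$ meeting the center are $v_0u^j_1$ and $u^j_{m-1}v_0$. The organising observation is that deleting $v_0$ leaves $n$ disjoint copies of $P_{m-1}$, so the matchings that avoid the center form the induced subcomplex $\mathrm{M}(D^n_m\setminus v_0)\cong\mathrm{M}(P_{m-1})^{\ast n}$, the $n$-fold join (here $\ast$ denotes join). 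Feeding the values of $\mathrm{M}(P_{m-1})$ from Proposition~\ref{prop: matching path} into $\mathbb{S}^a\ast\mathbb{S}^b\simeq\mathbb{S}^{a+b+1}$ already predicts the three cases: for $m=3k$ one has $P_{m-1}=P_{3(k-1)+2}$, whose matching complex is contractible, so the join is contractible; for $m=3k+1$ one has $\mathrm{M}(P_{3k})\simeq\mathbb{S}^{k-1}$, so the join is $\mathbb{S}^{nk-1}$; and for $m=3k+2$ one again gets $\mathrm{M}(P_{3k+1})\simeq\mathbb{S}^{k-1}$, so the avoid-center part contributes only an $\mathbb{S}^{nk-1}$ and any additional spheres must come from matchings that use the center.

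For the actual computation I would set $\sigma_0:=\mathrm{SC}(M_0)$, where $M_0$ is a canonical maximal matching that restricts in each arm to the spread-out matching of Proposition~\ref{prop: matching path} (and uses at most one edge at the center). Since $\mathrm{M}(D^n_m)$ is flag, $\sigma_0$ is collapsible by Proposition~\ref{prop: star cluster collapsible}, so $\Delta_0$ admits a matching with a single critical $0$-simplex. I would then peel off, arm by arm, the stars of the center edges $v_0u^j_{m-1}$ (and, where needed, $v_0u^j_1$) not already absorbed by $\sigma_0$, indexing these collections by a poset with $\sigma_0$ as unique minimum and checking that each down-set $\bigcup_{\tau\le\sigma}\Delta_\tau$ is a subcomplex. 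On each such $\Delta_j$ I would use the cone trick already employed in the path and cycle arguments: pair a simplex containing the distinguished center edge with the one obtained by toggling a fixed adjacent interior edge of that arm, so every simplex of $\Delta_j$ is matched and the matching is acyclic because it lies inside the matching on a cone.

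What survives in the final collection then depends only on $m\bmod 3$. For $m=3k+1$ the mechanism is transparent: using a center edge shortens the affected arm to $P_{m-2}=P_{3(k-1)+2}$, whose matching complex is contractible, so those simplices pair off and the only critical cell besides the $0$-simplex is the single top matching obtained by taking the complementary spread matching in every arm; its dimension is $nk-1$, giving $\mathbb{S}^{nk-1}$ by Theorem~\ref{thm: Forman}. For $m=3k$ the avoid-center part is already contractible, and the task is to arrange the matchings on the center-edge collections so that every simplex outside $\Delta_0$ is matched, leaving $\mathrm{M}(D^n_m)\simeq *$. For $m=3k+2$, using a center edge shortens an arm to $P_{m-2}=P_{3k}$ with $\mathrm{M}(P_{3k})\simeq\mathbb{S}^{k-1}$, which raises the top dimension to $nk$; here the proof must exhibit, after an arm-symmetric matching, exactly $2n-1$ critical $(nk)$-simplices together with the single critical $0$-simplex, so that Theorem~\ref{thm: Forman} yields $\vee^{2n-1}\mathbb{S}^{nk}$.

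I expect the main obstacle to be precisely the two residue classes where the center genuinely interacts with the arms. Unlike the path and cycle, $D^n_m$ branches at $v_0$, so the two center edges of each of the $n$ arms compete for the single vertex $v_0$, and one must choose the matching on the leftover collection very carefully (arm-symmetrically) to land on the sharp count $2n-1$ in the $m=3k+2$ case while absorbing the lower-dimensional $\mathbb{S}^{nk-1}$ coming from the avoid-center part. The $m=3k$ case is subtler than it looks, since the center-edge simplices are individually nontrivial there ($\mathrm{M}(P_{3k-2})\simeq\mathbb{S}^{k-2}$) and must cancel only collectively. Finally, the whole scheme requires $n\ge 2$: at $n=1$ one has $D^1_m=C_m$, and the value $\mathbb{S}^{k-1}\vee\mathbb{S}^{k-1}$ for $m=3k$ from Proposition~\ref{prop: matching cycle} differs from the contractible answer, so the statement should be read with the standing hypothesis $n\ge 2$ used in these windmill constructions.
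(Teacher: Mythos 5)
Your plan has the right skeleton --- Cluster Lemma with $\Delta_0$ the star cluster of a carefully chosen maximal matching, perfectly matched intermediate pieces, and a small final piece carrying the critical cells --- and two of your side remarks are genuinely valuable: the identification of the center-avoiding subcomplex with the join $\mathrm{M}(P_{m-1})^{*n}$ is a correct sanity check that predicts all three answers, and you are right that the statement requires $n\ge 2$ (for $n=1$ and $m=3k$ the answer is $\mathbb{S}^{k-1}\vee\mathbb{S}^{k-1}$, not a point). But as written this is a plan rather than a proof, and the two places you flag as ``the main obstacle'' are exactly where the content of the theorem lives; neither is resolved.

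Concretely: (i) your decomposition after $\sigma_0$ is never pinned down, and the one you sketch (stars of the $2n$ center edges, matched by toggling a fixed adjacent interior edge) is not checked to work --- the toggled edge may be blocked within a given simplex, or the toggled simplex may fall into an earlier piece, so neither well-definedness nor perfectness of those matchings is established. The paper instead peels off stars of shifted \emph{interior} matchings, $\sigma_{\nu}=\bigcup_i\mathrm{st}(\bigcup_{j=1}^{k-\nu}(v_{(3j)_i}v_{(3j+1)_i}))$ and $\sigma_k=\bigcup_i\mathrm{st}(\bigcup_{j=0}^{k-1}(v_{(3j+2)_i}v_{(3j+3)_i}))$, for which the toggle is always legal and stays in the piece. (ii) The critical cells are never exhibited. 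For $m=3k+2$ you say the proof ``must exhibit'' $2n-1$ critical $(nk)$-simplices; the actual argument shows the last piece consists of exactly $2n+1$ explicit simplices --- the $(nk-1)$-simplex $\bigcup_i\bigcup_{j}(v_{(2+3j)_i}v_{(3+3j)_i})$ together with its $2n$ extensions by a single center edge --- of which only one pair can be matched, leaving $2n-1$ top cells; nothing in your write-up produces that count. (iii) For $m=3k$ the resolution is not a delicate collective cancellation of the center pieces: for $n\ge 2$ the star cluster $\sigma_0$ of the all-arms interior matching already contains \emph{every} matching of $D^n_{3k}$ (blocking every edge $v_{(3j+1)_i}v_{(3j+2)_i}$ would force a blocking edge at $v_0$ in each arm simultaneously, which a matching cannot do), so there is nothing outside $\Delta_0$ to cancel. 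Finally, the join heuristic cannot be promoted to an argument: in the $m=3k+2$ case the $\mathbb{S}^{nk-1}$ carried by the center-avoiding subcomplex dies in the full complex, so that subcomplex is neither a retract nor a wedge summand.
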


\begin{proof}
   Let $D_m^n$ be the Dutch windmill graph with center vertex $v_0$ and for each of the $n$ cycles $C_m$, let $v_{(j)_i}$ denote vertex $j$ of cycle $i$, $0\leq j\leq m-1$ and $1\leq i\leq n.$ We apply the Cluster lemma by defining the following collections:

    \begin{enumerate}
        \item Let $\sigma_0 := \mathrm{SC}\{\bigcup_{i=1}^{n} (\bigcup_{j=0}^{k-1} (v_{({3j+1})_i}v_{({3j+2})_i}))\}$
        \item For $1 \leq \nu \leq k-1$, let $\sigma_{\nu} := \bigcup^{n}_{i=1}(\mathrm{st}(\bigcup^{k-\nu}_{j=1}(v_{(3j)_i}v_{(3j+1)_i})))$
        \item Let $\sigma_{k} := \bigcup^{n}_{i=1}(\mathrm{st}(\bigcup^{k-1}_{j=0}(v_{(3j+2)_i}v_{(3j+3)_i})))$
    \end{enumerate}
    Define $\Delta_0 := \sigma_0$ and $\Delta_{\beta} := \sigma_{\beta} - \bigcup_{\alpha=0}^{\beta-1}{\sigma_{\alpha}}$. Then $\bigcup_{\beta=0}^{k}{\Delta_{\beta}} = \mathrm{M}(D^n_m)$. We now define an acyclic matching on each $\Delta_{\beta}$ as follows:

    We know $\Delta_0$ is collapsible by Proposition \ref{prop: star cluster collapsible} and Lemma \ref{lem: flag}. Observe that $\Delta_1, \ldots, \Delta_{k} = \emptyset$ for $m=3k$, which implies that $\mathrm{M}(D^n_m) \simeq *$.

    Hence, suppose $m\neq 3k$. Let  $1\leq \nu\leq k-1$ and consider $\Delta_{\nu}$.  For each $1\leq i\leq n$, we match $\bigcup^{k-\nu}_{j=1}(v_{(3j)_i}v_{(3j+1)_i})$ with $(v_{{(3(k-\nu)+2)}_i}v_{{(3(k-\nu)+3)}_i}) \cup \bigcup^{k-\nu}_{j=1}(v_{(3j)_i}v_{(3j+1)_i})$. This produces an acyclic matching for all gradient vector fields in $\Delta_{\nu}$. It remains to put a matching on to $\Delta_k$.

    For $\Delta_k$, we consider cases:
    \begin{description}
    \item[Case 1] Let $m=3k+1$. Then $\Delta_k$ has one element, namely,
    \begin{align*}
    	\bigcup^{n}_{i=1}\bigcup^{k-1}_{j=0}(v_{(2+3j)_i}v_{(3+3j)_i}).
    \end{align*}
    This is an $(nk-1)$-unmatched simplex, so it is critical, and thus $\mathrm{M}(D^n_m) \simeq  \mathbb{S}^{nk-1}$.

    \item[Case 2] Let $m=3k+2$. Then $\Delta_k$ has $2n+1$ elements which are given by
    \begin{eqnarray*}
        &&\bigcup^{n}_{i=1}\bigcup^{k-1}_{j=0}(v_{(2+3j)_i}v_{(3+3j)_i})\\
     &&\text{For each } 1\leq \ell\leq n, (v_{(0)_{\ell}}v_{(1)_{\ell}})\cup \bigcup_{i=1}^n\bigcup^{k-1}_{j=0}(v_{(2+3j)_i}v_{(3+3j)_i})\\
        && \text{For each } 1\leq \ell\leq n,(v_{(0)_{\ell}}v_{(m-1)_{\ell}}) \cup \bigcup_{i=1}^n\bigcup^{k-1}_{j=0}(v_{(2+3j)_i}v_{(3+3j)_i})
    \end{eqnarray*}

We can only create one matching, namely, we match
    \begin{align*}
    	 \bigcup^{n}_{i=1}\bigcup^{k-1}_{j=0}(v_{(2+3j)_i}v_{(3+3j)_i}) \text{ with } (v_{(0)_1}v_{{(1)}_1})\cup \bigcup_{i=1}^n\bigcup^{k-1}_{j=0}(v_{(2+3j)_i}v_{(3+3j)_i}).
    \end{align*}
This leaves $2n-1$ $(nk)$-simplices unmatched. Thus, $\mathrm{M}(D^n_m) \simeq \vee^{2n-1}\mathbb{S}^{nk}$.
    \end{description}
\end{proof}

\section{Future directions and potential pursuits}

\begin{openquestion}
    One direction that seems to hold great potential for computing the homotopy type of the Morse complex concerns the relationship between the homotopy type of the Morse complex and the generalized Morse complex. We argued using two elementary collapses that $\mathcal{G}\M(C_t \vee l) \simeq \M(C_t \vee l)$. Because the former is a flag complex, its homotopy type should theoretically be easier to compute.  Consider another example

    \begin{center}
		\begin{figure} [h]
		
		\begin{tikzpicture}[scale=1.5, node distance = {20mm}, thick, main/.style = {draw, circle}]
		
		    \draw[draw=black] (0,0)--(1,1)--(0,2)--cycle;
		    \draw[draw=black] (2,0)--(1,1)--(2,2)--cycle;
		
		    \node[inner sep=1pt, circle, fill=black](v1) at (0,0) {};
            \node[inner sep=1pt, circle, fill=black](v2) at (0,2) {};

            \node[inner sep=1pt, circle, fill=black](v3) at (1,1) {};

            \node[inner sep=1pt, circle, fill=black](v4) at (2,0) {};
            \node[inner sep=1pt, circle, fill=black](v5) at (2,2) {};
		
		\end{tikzpicture}
		
		\end{figure}
		\end{center}

    Using the Cluster Lemma starting with a star collapse, we can apply a matching to $\M(D^2_3)$, finding that its homotopy type is collapsible. Additionally, using the Cluster Lemma starting with a star cluster collapse, we can apply a matching to $\mathcal{G}\M(D^2_3)$ to compute the homotopy type of a point. We would like to question whether there is a way to use the generalized Morse complex as a tool for computing the homotopy type of the Morse complex for certain complexes.

\end{openquestion}

\begin{openquestion}

One way to use the homotopy type of the generalized Morse complex to determine the homotopy type of the Morse complex is to show that the former collapses to the later. This is in general not always possible since, the homotopy type of the Generalized Morse complex of a cycle computed in Theorem \ref{thm: generalized cycle}, does not agree with the homotopy type of the cycle of the Morse complex. However, one can use the matching found in the proof of Theorem \ref{thm: generalized cycle}, throw out the closed V-paths in the matching, and obtain a matching on the Morse complex.  In this case, the critical cells occur in different dimensions so the homotopy type is not uniquely determined.  However, there may be special cases where the homotopy type can be recovered from knowledge of the critical cells and some other information.  See, for example, \cite[Theorem 2.2]{Kozlov2011}.     In particular, the homotopy type of the Morse complex of the $3-$simplex remains unknown. Chari and Joswig \cite{CJ-2005} showed that the $3-$simplex satisfies $\{b_0 = 1, b_5 = 99\}$ using software.  While we cannot use star clusters to collapse the Morse complex of the $n$-simplex, can we create a matching on the generalized Morse complex and then remove the cyclic gradient vector fields from the matching? Or, can a similar matching strategy provide further insight on how to apply a matching to the Morse complex of the $n-$simplex?









\end{openquestion}

\providecommand{\bysame}{\leavevmode\hbox to3em{\hrulefill}\thinspace}
\providecommand{\MR}{\relax\ifhmode\unskip\space\fi MR }
\providecommand{\MRhref}[2]{%
  \href{http://www.ams.org/mathscinet-getitem?mr=#1}{#2}
}
\providecommand{\href}[2]{#2}

\end{document}